    \def\beq{\begin{eqnarray}}
    \def\eeq{\end{eqnarray}}
    \def\beqq{\begin{eqnarray}}
    \def\eeqq{\end{eqnarray}}
    \def\re{\textnormal {Re}}
    \def\im{\textnormal {Im}}
    \def\r{{\mathbb R}}
    \def\c{{\mathbb C}}
    \def\d{{\textnormal d}}
    \def\i{{\textnormal i}}
	\newtheorem{theorem}{Theorem}
	\newtheorem{lemma}{Lemma}
	\newtheorem{proposition}{Proposition}
\title{Computing the truncated theta function via Mordell integral}
\author{
A. Kuznetsov
\thanks{{Research supported by the
Natural Sciences and Engineering Research Council of Canada. The author would like to thank the anonymous referee for careful reading of the paper and for providing many valuable comments and suggestions. The first version of this paper was written while the author was visiting the Department of Mathematical Sciences of the University of Bath, whose hospitality is highly appreciated. 
}}  \\ \\
Dept. of Mathematics and Statistics\\  York University \\
4700 Keele Street 
\\Toronto, ON \\ M3J 1P3,  Canada 
 }
\begin{document}

\maketitle

\begin{abstract}
Hiary \cite{Hiary_theta} has presented an algorithm which allows to evaluate
 the truncated theta function $\sum_{k=0}^n \exp(2\pi \i (zk+\tau k^2))$ to within $\pm \epsilon$ in $O(\ln(\tfrac{n}{\epsilon})^{\kappa})$ arithmetic operations for any real $z$ and $\tau$. 
This remarkable result has many applications in Number Theory, in particular it is the crucial element
 in Hiary's algorithm for computing $\zeta(\tfrac{1}{2}+\i t)$ to within $\pm t^{-\lambda}$ 
in $O_{\lambda}(t^{\frac{1}{3}}\ln(t)^{\kappa})$ arithmetic operations, see \cite{Hiary_zeta}. 
We present a significant simplification of Hiary's algorithm for evaluating the truncated theta function. 
Our method avoids the use of the Poisson summation formula, and substitutes it with an explicit identity involving the Mordell integral. This results in an algorithm which is efficient, conceptually simple and easy to implement. 
\end{abstract}

{\vskip 0.15cm}
 \noindent {\it Keywords}: truncated theta function, Mordell integral, Riemann zeta function
{\vskip 0.05cm}
 \noindent {\it 2010 Mathematics Subject Classification }: 11Y16, 11M06 

\newpage


\section{Introduction and main results}\label{sec_introduction}

The truncated theta function  is defined  as 
\beq\label{def_F}
F_n(z,\tau):=\sum\limits_{k=0}^n e^{2\pi \i (zk+\tau k^2)},
\eeq
where $n\in {\mathbb N}$ and $z, \tau \in \r$. We are interested in computing efficiently
 the truncated theta function $F_n(z,\tau)$ 
for large values of $n$. 
The motivation for developing such algorithms comes from computational Number Theory. One particularly 
important application is for computing the Riemann zeta function
$\zeta(\tfrac{1}{2}+\i t)$ for large $t$. The classical result by Riemann and Siegel (see 
 formula 4.17.5 in \cite{Ti1986}) allows to evaluate $\zeta(\tfrac{1}{2}+\i t)$ to within $\pm t^{-\lambda}$ in $O_{\lambda}(t^{\frac{1}{2}})$ arithmetic operations (by an arithmetic  
operation we mean additions, multiplications, evaluations of the logarithm and of the complex exponential function). It is a very challenging task to prove that the number of operations can be reduced to  $O(t^{\alpha})$ with some $\alpha<1/2$. It is an even harder problem to develop an algorithm which would not only have theoretical complexity $O(t^{\alpha})$, but would also be feasible for practical implementation. 
 In the recent papers \cite{Hiary_zeta,Hiary_theta}, Hiary has developed two new algorithms, which allow us to evaluate 
 $\zeta(\tfrac{1}{2}+\i t)$ to within $\pm t^{-\lambda}$ in $O_{\lambda}(t^{\frac{1}{3}}\ln(t)^{\kappa})$ and $O_{\lambda}(t^{\frac{4}{13}}\ln(t)^{\kappa})$) arithmetic operations (we will refer to them as $1/3$-algorithm and $4/13$-algorithm). 
  The $1/3$-algorithm is particularly attractive compared with the $4/13$-algorithm, 
as it is conceptually simpler, has no substantial storage requirements and should be easier to implement.  
And this brings us back to the truncated theta functions, as the $1/3$-algorithm is 
fundamentally based on another result by Hiary (see  \cite[Theorem 1.1]{Hiary_theta}), which 
states that $F_n(z,\tau)$ and its  derivatives can be computed to within $\pm \epsilon$ in $O(\ln\left(\tfrac{n}{\epsilon}\right)^{\kappa})$ arithmetic operations. 

Let us summarize the main ideas behind Hiary's truncated theta function algorithm. The algorithm is based on the iterative application of the following two-step procedure. The first step is to ``normalize" $z$ and $\tau$ so that 
they lie in the intervals  $z \in [-1/2,1/2]$ and  $\tau \in  [-1/4,1/4]$. This is easily achieved 
via the identities
\beq\label{identities_step_1}
F_n(z,\tau)=F_n\left(z+\tfrac{1}{2},\tau+\tfrac{1}{2}\right)=F_n(z+k,\tau+l), 
\;\;\; k, l \in {\mathbb Z},
\eeq
both of which follow directly from \eqref{def_F}. If $|\tau| < 1/n$, then we compute $F_n(z,\tau)$ by the Euler-Maclaurin summation
(see Section 3.2 in \cite{Hiary_theta}), 
otherwise we proceed to the second step of the algorithm, which is based on the following identity
\beq\label{Hiary_main_formula}
F_n(z,\tau)=\frac{e^{\frac{\pi \i }{4}-\frac{\pi \i z^2}{2\tau}}}{\sqrt{2\tau}}  F_m\left(\tfrac{z}{2\tau},
-\tfrac{1}{4\tau}\right)+R_{m,n}(z,\tau),
\eeq
where $\tau>0$ and $m=\lfloor 2n \tau \rfloor$. Here and everywhere in this paper $\lfloor x \rfloor:=\max\{ k\in {\mathbb Z} \; : \; k \le x\}$ denotes the floor function. A corresponding identity for $\tau<0$ can be derived by taking the conjugate of both sides of 
\eqref{Hiary_main_formula} and using the fact that $\overline{F_n(z,\tau)}=F_n(-z,-\tau)$. 

Identity \eqref{Hiary_main_formula} plays the central role in Hiary's algorithm. It is derived by applying the Poisson summation formula to \eqref{def_F} and using the crucial self-similarity property of the Gaussian function (the fact that taking Fourier transform of 
 $\exp(-ax^2-bx-c)$ results in a function of the same form but with different parameters).  
The truncated theta function $F_m(\cdot,\cdot)$ in the right-hand side of \eqref{Hiary_main_formula}
is the dominant contribution arising from the Poisson summation formula, while $R_{m,n}(z,\tau)$ can be considered as the remainder term. 
The key idea behind the second step in Hiary's algorithm is that
 the identity \eqref{Hiary_main_formula} transforms a long sum with $n$ terms into a short
sum having $m=\lfloor 2n |\tau| \rfloor\le n/2$ terms (recall that we have normalized $\tau$ so that $|\tau|\le 1/4$). If $m$ is smaller than a fixed power of $\ln(n/\epsilon)$, then we compute $F_m(\cdot,\cdot)$ by direct summation, otherwise we apply the same two-step procedure to $F_m(\cdot,\cdot)$. It is clear that this algorithm will terminate after at most $\log_2(n)$ iterations (where $\log_2(\cdot)$ denotes the logarithm with base two). 

Our goal in this paper is to simplify Hiary's algorithm for computing the truncated theta function. In order to present our results, first we need to introduce {\it Mordell integral}, which is defined as 
\beq\label{def_hztau}
h(z,\tau):=\int_{\r}  \frac{e^{\pi \i \tau x^2 -2 \pi z x}}{\cosh(\pi x)} \d x, \;\;\; z\in \c, \; \im(\tau)>0. 
\eeq
We need to extend this definition for real values of $\tau$. Assume that $\tau$ lies in the quadrant $\re(\tau)>0$ and $\im(\tau)>0$. Then the following identity is true
\beq\label{def_hztau_2}
h(z,\tau)
=
e^{\frac{\pi \i}{4}} \int_{\r} 
\frac{e^{-\pi \tau y^2-2\pi z e^{\frac{\pi \i}{4}}  y}}{\cosh(\pi  e^{\frac{\pi \i}{4}} y)} \d y
=2e^{\frac{\pi \i}{4}} \int_0^{\infty} e^{-\pi \tau y^2}
\frac{\cosh(2\pi z e^{\frac{\pi \i}{4}}  y)}{\cosh(\pi  e^{\frac{\pi \i}{4}} y)} \d y.
\eeq
This result can be easily established by rotating the contour of integration $\r^+ \mapsto e^{\frac{\pi \i}{4}}\r^+$
in the integral \eqref{def_hztau} and then changing the variable of integration $x=e^{\frac{\pi \i}{4}}y$.
It is clear the integral in \eqref{def_hztau_2} provides the analytic continuation of $h(z,\tau)$ into the domain $\re(\tau)>0$, $z\in \c$. We will adopt this as the definition of $h(z,\tau)$ for $\tau>0$ and $z\in \r$, while for $\tau<0$ and $z\in \r$ we will define $h(z,-\tau):=\overline{h(z,\tau)}$. 

Mordell integral and related functions have been studied for more than a century. One particular example of such 
integrals was used by Riemann to derive the functional equation for the zeta function (see section 2.10 in \cite{Ti1986}), while some further special cases were investigated by Ramanujan \cite{Ramanujan}. 
Mordell \cite{Mordell,Mordell2} has developed a general theory of 
$h(z,\tau)$ and some related integrals. Among many other results, Mordell has discovered the connections
between  $h(z,\tau)$ and theta functions, he has studied general modular transformations of $h(z,\tau)$ and has proved that $h(z,\tau)$ can be expressed in terms of the Gauss sums when $\tau$ is rational (more precisely,
$h(z,p/q)$ can be expressed in terms of functions $F_q(z,p/(2q))$ and $F_p(z,-q/(2p))$, see section 8 in 
\cite{Mordell2}). Recently,  the Mordell integral $h(z,\tau)$ was investigated by Zwegers in his Ph.D. thesis \cite{Zwegers}, and we will follow Zwegers's notation in our paper.

The following theorem is our first main result, and it is the basis for our
simplified version of Hiary's algorithm for fast evaluations of $F_n(z,\tau)$. 
\begin{theorem}\label{thm_main}
For $\tau>0$, $z\in \r$ and $m, n \in {\mathbb N}\cup\{0\}$
\beq\label{main_formula}
F_n(z,\tau)=\frac{e^{\frac{\pi \i }{4}-\frac{\pi \i z^2}{2\tau}}}{\sqrt{2\tau}}  F_m\left(\tfrac{z}{2\tau},
-\tfrac{1}{4\tau}\right)+R_{m,n}(z,\tau),
\eeq
where
\beq\label{def_Rmnztau}
R_{m,n}(z,\tau)&=& -\frac{\i}{2}e^{-\pi \i \left(z-\frac{\tau}{2} \right) } h\left(z-\tau+\tfrac{1}{2},-2\tau \right)
\\ \nonumber
&-&\frac{\i}{2} (-1)^m e^{2\pi \i\left(n+\frac{1}{2}\right)  \left(z+ \tau (n+\frac{1}{2}) \right)} 
h\left(z+(2n+1)\tau-m-\tfrac{1}{2},-2\tau\right).
\eeq
\end{theorem}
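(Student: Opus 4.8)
The plan is to isolate the transformation factor $T(z,\tau):=\tfrac{1}{\sqrt{2\tau}}\,e^{\pi\i/4-\pi\i z^2/(2\tau)}$ as an ordinary Gaussian (Fresnel) integral, and then to manufacture the remainder $R_{m,n}$ directly out of two boundary contour integrals, so that the Mordell integrals $h(\cdot,-2\tau)$ emerge at the two ends of the summation without any appeal to the Poisson summation formula. Throughout I write $g(w):=e^{2\pi\i(zw+\tau w^2)}$, so that $F_n(z,\tau)=\sum_{k=0}^n g(k)$.

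\emph{Step 1 (the Gaussian bridge).} I would first record the elementary evaluation $\int_{\r}e^{2\pi\i(\tau x^2+zx)}\,\d x=T(z,\tau)$ for $\tau>0$, which is the genuine source of the prefactor in \eqref{main_formula} and the explicit substitute for Poisson summation. Replacing $z$ by $z-j$, using $T(z,\tau)\,e^{2\pi\i(\frac{z}{2\tau}j-\frac1{4\tau}j^2)}=T(z-j,\tau)$, and summing the finite geometric series yields the bridge identity
\[
T(z,\tau)\,F_m\Big(\tfrac{z}{2\tau},-\tfrac{1}{4\tau}\Big)=\int_{\r}e^{2\pi\i(\tau x^2+zx)}\,\frac{1-e^{-2\pi\i(m+1)x}}{1-e^{-2\pi\i x}}\,\d x,
\]
the interchange of the finite sum and the integral being trivial. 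This reduces the theorem to proving that $F_n(z,\tau)$ minus the right-hand side above equals $R_{m,n}(z,\tau)$.

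\emph{Step 2 (residues and contour rotation).} Next I would represent $F_n$ by residues of the kernel $1/(1-e^{-2\pi\i w})$, whose residue at each integer $w=k$ is $g(k)/(2\pi\i)$, and deform the surrounding contour onto the two parallel steepest-descent lines $L_a:=\{w:\ \re(w)-\im(w)=a\}$ (direction $e^{\pi\i/4}$), along which $|g|$ decays like $e^{-2\pi\tau t^2}$. The lines $L_{-1/2}$ and $L_{n+1/2}$ enclose exactly the poles $w=0,1,\dots,n$, so that $F_n$ is a difference of two slanted-line integrals and the connecting arcs at infinity vanish by the Gaussian decay. I would then rotate the $\r$-integral from Step 1 onto the same direction (its integrand being entire), split it through $1/(1-e^{-2\pi\i w})$ and $e^{-2\pi\i(m+1)w}/(1-e^{-2\pi\i w})$, and form $F_n-T(z,\tau)F_m(\cdots)$: the poles crossed when sliding the split contours onto $L_{-1/2}$ and $L_{n+1/2}$ account precisely for the residues $\sum_k g(k)$, leaving only two slanted-line integrals localized at the two ends.

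\emph{Step 3 (recognizing the Mordell integrals) and the main obstacle.} The decisive computation is that on $L_{-1/2}$, with $w=-\tfrac12+e^{\pi\i/4}y$, one has $1-e^{-2\pi\i w}=1+e^{-2\pi\i e^{\pi\i/4}y}=2e^{-a}\cosh a$ with $a=-\pi e^{-\pi\i/4}y$, so the kernel becomes $\tfrac12 e^{a}/\cosh(\pi e^{-\pi\i/4}y)$, while the quadratic part of $g$ supplies the factor $e^{-2\pi\tau y^2}$; this is exactly the conjugated ($-2\tau$) form \eqref{def_hztau_2} of the Mordell integral, and an analogous substitution $w=n+\tfrac12+e^{\pi\i/4}y$ at the other end produces the second term of \eqref{def_Rmnztau}. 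The hard part is not this structural match but the phase and orientation bookkeeping: collecting the constant contributions from the base points $\mp\tfrac12$ and $n+\tfrac12$ (which produce the prefactors $-\tfrac\i2 e^{-\pi\i(z-\tau/2)}$ and $-\tfrac\i2(-1)^m e^{2\pi\i(n+\frac12)(z+\tau(n+\frac12))}$, the sign $(-1)^m$ coming from $e^{-2\pi\i(m+1)(n+\frac12)}$) and reading off the exact Mordell arguments $z-\tau+\tfrac12$ and $z+(2n+1)\tau-m-\tfrac12$. I would fix all normalizations by checking the degenerate case $m=n=0$, where \eqref{main_formula} collapses to a single closed identity expressing $1$ in terms of $T(z,\tau)$ and two Mordell integrals, and which can be matched against the known functional equations of $h(z,\tau)$.
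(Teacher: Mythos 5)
Your proposal is correct, and it reaches \eqref{main_formula} by a genuinely different route than the paper. I checked your deferred bookkeeping and it does close: parametrizing $w=-\tfrac12+e^{\frac{\pi\i}{4}}y$ turns the left boundary integral into $-\tfrac{\i}{2}e^{-\pi\i(z-\frac{\tau}{2})}h(z-\tau+\tfrac12,-2\tau)$ (the factor $e^{\frac{\pi\i}{4}}$ from $\d w$ and another $e^{\frac{\pi\i}{4}}$ from matching your $e^{-\frac{\pi\i}{4}}$-direction integrand to \eqref{def_hztau_2} via the convention $h(\cdot,-2\tau)=\overline{h(\cdot,2\tau)}$ combine to give the $\i$ in $-\tfrac{\i}{2}$), while at $w=n+\tfrac12+e^{\frac{\pi\i}{4}}y$ the constant phase $e^{-2\pi\i(m+1)(n+\frac12)}=(-1)^{m+1}$ and the linear exponent $-2\pi e^{-\frac{\pi\i}{4}}\left(z+(2n+1)\tau-m-\tfrac12\right)y$ reproduce the second term of \eqref{def_Rmnztau} exactly; note also that once you slide only the $e^{-2\pi\i(m+1)w}$ piece of the split kernel from $L_{-1/2}$ to $L_{n+1/2}$, the crossed residues already regenerate $F_n$, so your separate residue representation of $F_n$ is redundant, and the rotation from $\r$ should be made directly onto $L_{-1/2}$ rather than about the pole at $w=0$ (each split kernel grows only exponentially along the slanted lines against the Gaussian decay of $g$, so convergence is routine). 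The paper's proof is entirely different in character: it is contour-free at this stage, taking as input the two classical functional equations of Lemma \ref{lemma_h_z_tau}; it conjugates the shift relation \eqref{hztau_period_1} and iterates it $m$ times so that the accumulated alternating Gaussian phases assemble into a truncated theta sum (equation \eqref{proof_eq2}), applies this twice --- once after the substitution $z=w/t$, $\tau=-1/t$ combined with the modular transformation \eqref{hztau_1_over_tau}, producing $F_n$, and once directly, producing $F_m$ --- and then eliminates the common term $h\left(w+nt+\tfrac{t}{2},-t\right)$ between the two identities, finishing with the reflection $k\mapsto m-k$ of $F_m$. What each buys: the paper's algebra is shorter and pushes all analytic work into the known Lemma \ref{lemma_h_z_tau}, whereas your argument is self-contained and structurally explains why the remainder consists of exactly two Mordell integrals anchored at $-\tfrac12$ and $n+\tfrac12$ and why $m$ enters only through $e^{-2\pi\i(m+1)w}$ --- it is in effect Mordell's original contour method, the closed-form counterpart of the Poisson-summation derivation in Hiary that the paper set out to avoid --- at the cost of the orientation and convergence bookkeeping you correctly flag as the main labor.
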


Using Theorem \ref{thm_main} we are able to establish the following result (which should be compared with Theorem 1.1 in \cite{Hiary_theta}).
\begin{theorem}\label{thm_main2}
There exists an algorithm such that for any  $\epsilon \in (0,\tfrac{1}{10})$, any $z,\tau \in (0,1)$ 
and any integer $n\ge 1$ the value of the function $F_n(z,\tau)$ can be evaluated to within $\pm \epsilon$ using 
$\le C_1 \ln\left(\tfrac{n}{\epsilon}\right)^3$ arithmetic operations on numbers of $\le C_2 \ln\left(\tfrac{n}{\epsilon}\right)$ bits. The algorithm requires $\le C_3 \ln\left(\tfrac{n}{\epsilon}\right)^2$ bits of memory. 
\end{theorem}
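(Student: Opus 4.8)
The plan is to convert the recursion implicit in Theorem \ref{thm_main} into an explicit algorithm and then control both its running time and the numerical error it accumulates. Given the input $(z,\tau,n)$, I would first normalize $z,\tau$ into the box $[-\tfrac12,\tfrac12]\times[-\tfrac14,\tfrac14]$ by means of the identities \eqref{identities_step_1}, reducing to the case $\tau>0$ by conjugation when necessary. If the current number of summands does not exceed a threshold $N_0=\lceil\ln(n/\epsilon)^2\rceil$ I evaluate $F_n$ directly; if $|\tau|<1/n$ I use Euler--Maclaurin summation; otherwise I apply \eqref{main_formula} with $m=\lfloor 2n\tau\rfloor$, evaluate the remainder $R_{m,n}$ from \eqref{def_Rmnztau}, and recurse on $F_m\big(\tfrac{z}{2\tau},-\tfrac1{4\tau}\big)$. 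Since $m=\lfloor 2n\tau\rfloor\le n/2$, the length of the sum is at least halved at each recursive call, so the recursion has depth at most $\log_2 n$.

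A preliminary point that makes the choice $m=\lfloor 2n\tau\rfloor$ work is that the arguments fed into the Mordell integral stay in a fixed compact set. Indeed $(2n+1)\tau-m=\{2n\tau\}+\tau$, so both shifts $z-\tau+\tfrac12$ and $z+(2n+1)\tau-m-\tfrac12$ lie in a bounded interval, while the second parameter obeys $|2\tau|\le\tfrac12$. This uniformity lets me invoke a single complexity estimate for $h$: starting from the Gaussian-decaying representation \eqref{def_hztau_2}, one truncates the integral at height $O(\sqrt{\ln(1/\epsilon')})$ and applies numerical quadrature, obtaining $h$ to within $\pm\epsilon'$ in $O(\ln(1/\epsilon')^2)$ operations on $O(\ln(1/\epsilon'))$-bit numbers. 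This quadrature bound is the one ingredient not contained in the excerpt, so I would establish it separately; granting it, the rest of Theorem \ref{thm_main2} follows from it together with Theorem \ref{thm_main}.

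The crux is the error analysis. Writing $n_0=n$, $\tau_0=\tau$, and letting $n_{j+1}=\lfloor 2n_j|\tau_j|\rfloor$, $\tau_{j+1}$ be the (normalized) parameters at depth $j$, an error committed at depth $\ell$ is magnified on its way back to the top by the product of the prefactors, namely $\prod_{j=0}^{\ell-1}(2|\tau_j|)^{-1/2}$. The decisive observation is that the floor yields the clean inequality $2|\tau_j|\ge n_{j+1}/n_j$, hence $\prod_{j=0}^{\ell-1}(2|\tau_j|)^{-1/2}\le(n_0/n_\ell)^{1/2}\le\sqrt n$, the product telescoping to a genuine (not merely asymptotic) bound. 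Consequently, if every Mordell integral and every arithmetic operation is carried out at a working accuracy $\epsilon'$ with $\ln(1/\epsilon')=O(\ln(n/\epsilon))$ --- concretely $\epsilon'\asymp\epsilon\,n^{-1}(\log_2 n)^{-1}$ --- the total propagated error telescopes to at most $\epsilon$. The hard part is to make this rigorous: I must track the multiplicative constants and round-off across all $\le\log_2 n$ levels, confirm that the prefactors $e^{\pi\i/4-\pi\i z^2/(2\tau)}/\sqrt{2\tau}$ are themselves evaluated to the required relative accuracy (their arguments can be large), and verify that the base cases (direct and Euler--Maclaurin summation) meet the same accuracy budget.

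Finally I would assemble the counts. There are at most $\log_2 n=O(\ln(n/\epsilon))$ recursion levels; at each level the work is dominated by two Mordell-integral evaluations at accuracy $\epsilon'$, costing $O(\ln(1/\epsilon')^2)=O(\ln(n/\epsilon)^2)$ operations, while the single terminal summation over $O(\ln(n/\epsilon)^2)$ terms is of the same order. Multiplying gives the claimed $O(\ln(n/\epsilon)^3)$ arithmetic operations, each on numbers of $O(\ln(n/\epsilon))$ bits. For the memory bound, a stack implementation stores only $O(1)$ numbers of $O(\ln(n/\epsilon))$ bits per level over a recursion of depth $O(\ln(n/\epsilon))$, giving $O(\ln(n/\epsilon)^2)$ bits in total, in agreement with the statement.
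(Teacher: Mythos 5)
Your proof skeleton is exactly the paper's: normalize via \eqref{identities_step_1}, recurse through \eqref{main_formula} with $m=\lfloor 2n|\tau|\rfloor$, note depth $\le\log_2 n$, and control error amplification by the telescoping bound $2|\tau_j|\ge n_{j+1}/n_j$, which is precisely how the paper obtains $|\alpha_{J-1}|<\sqrt{n}$. The genuine gap sits exactly where the paper does its real work: the evaluation of the Mordell integral. Your sketch --- truncate \eqref{def_hztau_2} at height $O\big(\sqrt{\ln(1/\epsilon')}\big)$ and apply quadrature --- fails for the small values of $\tau$ that step (iii) necessarily produces (the recursion runs with $|\tau_j|$ as small as $1/n_j$ under your threshold, $n_j^{-4}$ under the paper's, and $h$ is called with second argument $2|\tau_j|$). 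The integrand carries the Gaussian $e^{-\pi\tau y^2}$ of width $\tau^{-1/2}$; for instance at $z=\tfrac12$ the ratio $\cosh(2\pi z e^{\frac{\pi\i}{4}}y)/\cosh(\pi e^{\frac{\pi\i}{4}}y)$ is identically $1$, so $h(\tfrac12,\tau)=e^{\frac{\pi\i}{4}}/\sqrt{\tau}$ and the tail discarded beyond any polylogarithmic height has size $\Theta(\tau^{-1/2})$, not $\epsilon'$ --- and such $z$-arguments do occur, since the shifts $z-\tau+\tfrac12$ in \eqref{def_Rmnztau} pass through $\pm\tfrac12$. Capturing the full Gaussian bulk by quadrature would need height $\sim\sqrt{\ln(1/\epsilon')/\tau}$ with nodes at spacing $O(1)$ (the factor $1/\cosh(\pi e^{\frac{\pi\i}{4}}y)$ oscillates on unit scale), i.e.\ polynomially many operations in $n$, destroying the polylog claim. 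This is precisely why the paper proves Propositions \ref{prop_compute_h_tau_z}, \ref{prop_Phi} and \ref{prop_compute_h_z_tau2}: the geometric expansion of $1/\cosh$ extracts closed-form error-function terms $H_K(\pm z,\tau)$ in \eqref{def_big_Hn_z_tau} that absorb the $\tau^{-1/2}$ singularity exactly --- whence Proposition \ref{prop_compute_h_z_tau2} claims accuracy $\pm\epsilon/\sqrt{\tau}$ rather than $\pm\epsilon$ --- leaving a well-behaved remainder $J(K\pm z,\tau)$ computed by a short power series with Euler-number coefficients and a stable recurrence for $p_k$, with $\Phi(e^{\frac{\pi\i}{4}}x)$ itself handled by the Hunter--Regan formula. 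None of this is recoverable from your quadrature sketch, so the ``ingredient to be established separately'' is not merely unproved but wrongly specified.

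A second, smaller gap is the base case: you invoke Euler--Maclaurin summation for $|\tau|<1/n$ without any accuracy or complexity analysis, whereas the paper deliberately avoids Euler--Maclaurin and instead proves the self-contained Proposition \ref{proposition_small_tau} (threshold $|\tau|<n^{-4}$, Taylor expansion with the two regimes $|z|\gtrless n^{-1}$, the coefficient recursion \eqref{recursion_a_k_i} with the stability bound \eqref{bound_sin_cos}, and Bernoulli-number sums); this is a substantial component that your proof outsources to the literature. Finally, since $h$ is only computable to $\pm\epsilon'/\sqrt{\tau_j}$, your working precision must absorb an extra factor of $\tau_j^{-1/2}$ beyond the $\sqrt{n}\log_2 n$ amplification; your choice $\epsilon'\asymp\epsilon\, n^{-1}(\log_2 n)^{-1}$ covers this only marginally under your $1/n$ threshold, which is why the paper works at accuracy $\pm\epsilon/n^3$ throughout.
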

In the above result (and everywhere in this paper) we assume that $A_1,A_2,\dots$ and $C_1,C_2,\dots$ are constants, which are positive and absolute, in the sense that they do not depend on the values of any other parameters.

The paper is organized as follows: Section \ref{sec_proofs} contains the proofs of Theorems \ref{thm_main} and \ref{thm_main2} and in Section \ref{section_numerics} we discuss the practical implementation  and some extensions of the algorithm.

\section{Proof of the main results}\label{sec_proofs}

\begin{lemma}\label{lemma_h_z_tau}
For $z \in \r$ and $\tau>0$ 
\beq\label{hztau_period_1}
&&h(z,\tau)+h(z+1,\tau)=\frac{2}{\sqrt{\tau}}e^{\frac{\pi \i }{4}+\frac{\pi \i}{\tau} (z+\frac{1}{2})^2},\\
\label{hztau_1_over_tau}
&&h(z,\tau)=\frac{1}{\sqrt{\tau}}e^{\frac{\pi \i }{4}+\frac{\pi \i z^2}{\tau}}h(\tfrac{z}{\tau}, -\tfrac{1}{\tau}).
\eeq
\end{lemma}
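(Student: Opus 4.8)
The plan is to prove both identities first in the region $\im(\tau)>0$, where the integral \eqref{def_hztau} converges absolutely, and then extend them to $\tau>0$ by analytic continuation through \eqref{def_hztau_2} (noting that $\tau\mapsto -1/\tau$ preserves the upper half-plane, so the right-hand side of \eqref{hztau_1_over_tau} is well defined there). The single computational fact underlying everything is the Gaussian (Fresnel) integral
\[
\int_{\r} e^{\pi \i \tau x^2 - 2\pi w x}\,\d x = \frac{e^{\frac{\pi \i}{4}}}{\sqrt{\tau}}\, e^{\frac{\pi \i w^2}{\tau}}, \qquad w\in\c,
\]
obtained by completing the square and shifting the contour; I would fix the branch of $\sqrt{\tau}$ so that it is positive for $\tau>0$ and continuous on the upper half-plane.

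For \eqref{hztau_period_1} I would simply add the integrands defining $h(z,\tau)$ and $h(z+1,\tau)$. Their numerators combine to $e^{\pi\i\tau x^2 - 2\pi z x}(1+e^{-2\pi x})$, and since $(1+e^{-2\pi x})/\cosh(\pi x) = 2e^{-\pi x}$, the sum collapses to $2\int_{\r} e^{\pi\i\tau x^2 - 2\pi(z+\frac12)x}\,\d x$. Applying the Gaussian integral with $w=z+\tfrac12$ produces exactly the right-hand side of \eqref{hztau_period_1}.

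The modular identity \eqref{hztau_1_over_tau} is the substantive step, and I would obtain it from the \emph{self-duality of the hyperbolic secant under the Fourier transform}, namely $\int_{\r}\cosh(\pi x)^{-1}e^{-2\pi\i x\xi}\,\d x = \cosh(\pi\xi)^{-1}$. Writing $h(z,\tau)=\int_\r g(x)\cosh(\pi x)^{-1}\,\d x$ with $g(x)=e^{\pi\i\tau x^2-2\pi z x}$ and applying Parseval's identity turns this into $\int_\r \hat g(\xi)\cosh(\pi\xi)^{-1}\,\d\xi$ (using that the secant kernel is even and self-dual), where $\hat g(\xi)=\int_\r g(x)e^{-2\pi\i x\xi}\,\d x$ is again a Gaussian integral, evaluated by the displayed formula with $w=z+\i\xi$. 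Expanding $(z+\i\xi)^2 = z^2 + 2\i z\xi - \xi^2$ pulls out the prefactor $\tfrac{e^{\pi\i/4}}{\sqrt\tau}e^{\pi\i z^2/\tau}$ and leaves precisely $\int_\r \cosh(\pi\xi)^{-1}e^{\pi\i(-1/\tau)\xi^2 - 2\pi(z/\tau)\xi}\,\d\xi = h(z/\tau,-1/\tau)$, which is \eqref{hztau_1_over_tau}.

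The main obstacle I anticipate is analytic rigor rather than algebra: justifying the contour shift in the Gaussian integral and the interchange of integrations hidden in Parseval's identity (both clean for $\im(\tau)>0$, where the integrands decay rapidly), and then checking that the continuation to $\tau>0$ via \eqref{def_hztau_2} is consistent with the chosen branch of $\sqrt{\tau}$ and the factor $e^{\frac{\pi\i}{4}}$. Once the identities hold on the upper half-plane, both sides are analytic in $\tau$ up to the boundary and agree there by continuity, which completes the proof.
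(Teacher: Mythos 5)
Your proposal is correct and is essentially the paper's own proof: you derive \eqref{hztau_period_1} from the identity $(1+e^{-2\pi x})/\cosh(\pi x)=2e^{-\pi x}$, which collapses the sum to a single Gaussian integral, and \eqref{hztau_1_over_tau} from Parseval's identity combined with the self-duality of $1/\cosh(\pi x)$ under the Fourier transform, exactly the two ingredients the paper uses. The only difference is one of bookkeeping, not of substance: you carry out the computation for $\im(\tau)>0$ with the original integral \eqref{def_hztau} and pass to $\tau>0$ by continuity (correctly flagging that the boundary values must match the paper's convention $h(z,-\tau)=\overline{h(z,\tau)}$), whereas the paper performs the identical manipulations directly at $\tau>0$ on the rotated contour \eqref{def_hztau_2}, i.e., after the substitution $x=e^{\frac{\pi\i}{4}}y$, so that no separate continuation step is required.
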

\begin{proof}
While the above identities are not new (see \cite{Mordell2} and  \cite[Proposition 1.2]{Zwegers}), we present the sketch of the proof for the sake of completeness.  Let us denote $\theta:=e^{\frac{\pi \i }{4}}$ and assume that $\tau>0$. From the second integral representation in \eqref{def_hztau_2} we obtain
\beqq
h(z)+h(z+1)=\theta \int_{\r} \frac{e^{-\pi \tau y^2-2\pi z \theta y}}{\cosh(\pi \theta y)} \left(1+e^{-2\pi \theta y} \right) \d y=
2\theta \int_{\r} e^{-\pi \tau y^2 - 2\pi \theta y \left(z+\frac{1}{2} \right)} \d y. 
\eeqq
Evaluating the integral in the right-hand side of the above identity (use formula (3.323.2) in \cite{Jeffrey2007}) gives us \eqref{hztau_period_1}. 

In order to prove \eqref{hztau_1_over_tau} we use formulas (3.323.2) and (3.511.4) in \cite{Jeffrey2007} and evaluate the following two integrals: For $w\in \r$
\beqq
\int_{\r} e^{-\pi \tau y^2-2\pi z \theta y-2\pi \i y w} \d y=\frac{1}{\sqrt{\tau}} e^{\frac{\pi \i}{\tau} (z+\theta w)^2},  \;\;\;\;\;
\theta \int_{\r} \frac{e^{-2\pi \i y w}}{\cosh(\pi \theta y)} \d y=\frac{1}{\cosh(\pi \bar\theta w)}.
\eeqq
Applying Parseval's Theorem for Fourier transform to the first integral in \eqref{def_hztau_2} gives us
\beqq
h(z,\tau)=\frac{1}{\sqrt{\tau}} \int_{\r}  
\frac{e^{\frac{\pi \i}{\tau} (z+\theta w)^2}}{\cosh(\pi \bar \theta w)} \d w
=\frac{1}{\sqrt{\tau}}e^{\frac{\pi \i }{4}+\frac{\pi \i z^2}{\tau}} \overline{h(z,\tfrac{1}{\tau})}=
\frac{1}{\sqrt{\tau}}e^{\frac{\pi \i }{4}+\frac{\pi \i z^2}{\tau}}h(\tfrac{z}{\tau}, -\tfrac{1}{\tau}).
\eeqq
\end{proof}

\vspace{0.15cm}
\noindent
{\bf Proof of Theorem \ref{thm_main}:}
We take the conjugate of both sides of equation \eqref{hztau_period_1} and obtain
\beqq
h\left(z-\tfrac{1}{2},-\tau\right)=
-h\left(z+\tfrac{1}{2},-\tau\right)+\frac{2}{\sqrt{\tau}}e^{-\frac{\pi \i }{4}-\frac{\pi \i z^2}{\tau}}.
\eeqq
Iterating the above identity $m$ times gives us
\beqq
h\left(z-\tfrac{1}{2},-\tau\right)=
(-1)^{m+1} 
h\left(z+m+\tfrac{1}{2},-\tau\right)+\frac{2}{\sqrt{\tau}} 
\sum\limits_{k=0}^m (-1)^k e^{-\frac{\pi \i}{4}-\frac{\pi \i}{\tau} (z+k)^2},
\eeqq
which is equivalent to
\beq\label{proof_eq2}
h\left(z-\tfrac{1}{2},-\tau\right)=
(-1)^{m+1} 
h\left(z+m+\tfrac{1}{2},-\tau\right)+\frac{2}{\sqrt{\tau}} 
e^{-\frac{\pi \i}{4}-\frac{\pi \i z^2}{\tau}}
F_m\left(\tfrac{1}{2}-\tfrac{z}{\tau},-\tfrac{1}{2\tau}\right).
\eeq
We change variables $z=w/t$, $\tau=-1/t$ and $m=n$ in \eqref{proof_eq2}, and then
apply transformation \eqref{hztau_1_over_tau} to both $h$-functions, which results in 
the following identity
\beq\label{proof_eq3}
e^{\frac{\pi \i}{4}+\frac{\pi \i}{t}\left(w-\frac{t}{2}\right)^2} h\left(w-\tfrac{t}{2},-t\right)
&=&(-1)^{n+1} e^{\frac{\pi \i}{4}+\frac{\pi \i}{t}\left(w+nt+\frac{t}{2}\right)^2} 
h\left(w+nt+\tfrac{t}{2},-t\right)\\ \nonumber
&+&
2
e^{\frac{\pi \i}{4}+\frac{\pi \i w^2}{t}}
F_n\left(w-\tfrac{1}{2},\tfrac{t}{2}\right).
\eeq
At the same time, changing variables $z=w+nt-m+t/2-1/2$ and $\tau=t$ in \eqref{proof_eq2} we obtain  
\beq\label{proof_eq4}
h\left(w+nt-m+\tfrac{t}{2}-1,-t\right)&=&
(-1)^{m+1} h\left(w+nt+\tfrac{t}{2},-t\right)\\
\nonumber
&+&
\frac{2}{\sqrt{t}} 
e^{-\frac{\pi \i}{4}-\frac{\pi \i}{t}\left(w+nt-m+\frac{t}{2}-\frac{1}{2}\right)^2}
F_m\left(\tfrac{1}{t} \left(m+\tfrac{1}{2}-w\right),-\tfrac{1}{2t}\right). 
\eeq
Eliminating $h\left(w+nt+\tfrac{t}{2},-t\right)$ from the two equations \eqref{proof_eq3} and \eqref{proof_eq4} 
gives us
\beqq
&&e^{\frac{\pi \i}{4}+\frac{\pi \i}{t}\left(w-\frac{t}{2}\right)^2} h\left(w-\tfrac{t}{2},-t\right)=2
e^{\frac{\pi \i}{4}+\frac{\pi \i w^2}{t}}
F_n\left(w-\tfrac{1}{2},\tfrac{t}{2}\right)+(-1)^{n+1} e^{\frac{\pi \i}{4}+\frac{\pi \i}{t}\left(w+nt+\frac{t}{2}\right)^2} 
\\  \nonumber
&\times& 
(-1)^{m+1}
\left[h\left(w+nt-m+\tfrac{t}{2}-1,-t\right)-\frac{2}{\sqrt{t}} 
e^{-\frac{\pi \i}{4}-\frac{\pi \i}{t}\left(w+nt-m+\frac{t}{2}-\frac{1}{2}\right)^2}
F_m\left(\tfrac{1}{t} \left(m+\tfrac{1}{2}-w\right),-\tfrac{1}{2t}\right) \right].
\eeqq
The above identity is equivalent to \eqref{main_formula}. In order to verify this one would need 
to apply the transformation
\beqq
F_m\left(\tfrac{1}{t} \left(m+\tfrac{1}{2}-w\right),-\tfrac{1}{2t}\right)=
e^{\frac{\pi \i}{t} (m^2-m(2w-1))}F_m\left(\tfrac{1}{t} \left(w-\tfrac{1}{2}\right),-\tfrac{1}{2t}\right),
\eeqq
(which follows easily from \eqref{def_F} by changing the index of summation $k\mapsto m-k$),
then change the variables $w=z+\tfrac{1}{2}$ and $t=2\tau$ and simplify the result.  
\qed

\vspace{0.35cm}
Now we need to introduce several new objects. We define the sequence $\{\tilde E_k\}_{k\ge 0}$ as 
\beq\label{def_Bk_Ek2}
 \frac{1}{\cosh(x)}=\sum\limits_{k\ge 0} \tilde E_k x^k,
\eeq
or, alternatively,  $\tilde E_k=E_k/k!$ where $\{E_k\}_{k\ge 0}$ are  Euler numbers.
We define the function $f_{\tau}(z)$ and the sequence of polynomials $\{q_{k}(\tau)\}_{k\ge 0}$ as 
\beq\label{def_f_tau_z}
f_{\tau}(z):=\frac{e^{\frac{\i \tau z^2}{\pi}}}{\cosh(z)}=\sum\limits_{k\ge 0} q_{k}(\tau) z^{2k}.
\eeq
Using equation \eqref{def_Bk_Ek2} it is easy to see that
\beq\label{q_k_tau_formula}
q_k(\tau)=\sum\limits_{j=0}^k \tilde E_{2k-2j}\frac{1}{j!}\left(\frac{\i \tau}{\pi}\right)^j.
\eeq 
We introduce the sequence of functions $\{p_k(x)\}_{k\ge 0}$, defined by
\beq\label{def_pk_x}
p_k(x):=\int_0^1 e^{-xu} u^k \d u.
\eeq
For $k\ge 1$, $\tau>0$ and $z\in \r$ we define
\beq\label{def_big_Hn_z_tau}
H_k(z,\tau):=\frac{e^{\frac{\pi \i}{4}}}{\sqrt{\tau}} 
\sum\limits_{l=0}^{k-1} 
(-1)^le^{\frac{\pi \i}{\tau}(z+l+\frac{1}{2})^2} 
\left[1-\Phi\left(\sqrt{\tfrac{\pi}{\tau}}e^{\frac{\pi \i}{4}}(z+l+\tfrac{1}{2}) \right) \right],
\eeq
where $\Phi(z)$ is the error function
\beq\label{series_for_Phi}
\Phi(z):=\frac{2}{\sqrt{\pi}} \int_0^z e^{-x^2} \d x=\frac{2}{\sqrt{\pi}}
\sum\limits_{n\ge 0} \frac{(-1)^n}{n!} \frac{z^{2n+1}}{2n+1}, \;\;\; z\in \c. 
\eeq
Finally, for $z>0$ and $\tau \in \r$  we define
\beq\label{def_big_Jn_z_tau}
J(z,\tau):=\int_{0}^{\infty} 
e^{-2x z} f_{\tau}(x)  \d x,
\eeq
where $f_{\tau}(x)$ is given by \eqref{def_f_tau_z}.

\begin{proposition}\label{prop_compute_h_tau_z}\mbox{}
\begin{itemize}
\item[(i)] For $k\ge 1$, $|z|\le 1/2$ and $\tau \in (0,1)$ 
\beq\label{hnztau_computing}
h(z,\tau)=H_k(z,\tau)+H_k(-z,\tau)+\frac{(-1)^k}{\pi} 
\left(J(k+z,\tau)+J(k-z,\tau) \right).
\eeq
\item[(ii)]
For $\epsilon>0$ let us denote $K=K(\epsilon)=2+2 \lceil \ln(\tfrac{1}{\epsilon})\rceil$. Then
for any $\epsilon \in (0,\tfrac{1}{10})$, $|z| \le 1/2$ and $\tau \in (0,1)$  
\beq\label{formula_dzj_h_z_tau}
h(z,\tau)=H_K(z,\tau)+H_K(-z,\tau)+\frac{1}{\pi} 
\sum\limits_{0\le l \le K} q_l(\tau) \big (p_{2l}(2(K+z))+p_{2l}(2(K-z)) \big )+{\mathcal E},
\eeq
where $|{\mathcal E}|<\epsilon$.
\end{itemize}
\end{proposition}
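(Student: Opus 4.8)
The plan is to realise $h(z,\tau)$ as a contour integral of $f_\tau$ over the ray $e^{\frac{\pi\i}{4}}\r$, to peel off the contribution of the first $k$ poles of $1/\cosh$ by an \emph{exact} finite geometric identity, and then to rotate the remaining ``tail'' down onto the real axis. Starting from the second representation in \eqref{def_hztau_2}, the substitution $x=\pi e^{\frac{\pi\i}{4}}y$ sends $-\pi\tau y^2\mapsto \frac{\i\tau x^2}{\pi}$ and $-2\pi z e^{\frac{\pi\i}{4}}y\mapsto -2zx$, so that, using that $f_\tau$ is even,
\[
h(z,\tau)=\frac{1}{\pi}\int_{e^{\frac{\pi\i}{4}}\r}e^{-2zx}f_\tau(x)\,\d x=\frac{2}{\pi}\int_{e^{\frac{\pi\i}{4}}\r^{+}}\cosh(2zx)\,f_\tau(x)\,\d x.
\]
Into the integrand I would insert the exact identity $\frac{1}{\cosh x}=2\sum_{l=0}^{k-1}(-1)^{l}e^{-(2l+1)x}+(-1)^{k}\frac{e^{-2kx}}{\cosh x}$, which is just the finite geometric sum for $(1+e^{-2x})^{-1}$ and so carries no convergence issues.

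For part (i) the finite sum is handled by writing $\cosh(2zx)=\tfrac12(e^{2zx}+e^{-2zx})$ and evaluating each ray integral $\int_{e^{\frac{\pi\i}{4}}\r^{+}}e^{\frac{\i\tau x^2}{\pi}-2ax}\,\d x$ with $a=l+\tfrac12\pm z$. Parametrising $x=e^{\frac{\pi\i}{4}}t$ and completing the square produces $e^{\frac{\i\pi a^2}{\tau}}$ times a complementary error function evaluated at $\sqrt{\pi/\tau}\,e^{\frac{\pi\i}{4}}a$; comparing with \eqref{def_big_Hn_z_tau}, the two families $a=l+\tfrac12+z$ and $a=l+\tfrac12-z$ reproduce exactly $H_k(z,\tau)$ and $H_k(-z,\tau)$. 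For the tail $\tfrac{2(-1)^k}{\pi}\int_{e^{\frac{\pi\i}{4}}\r^{+}}\cosh(2zx)e^{-2kx}f_\tau(x)\,\d x$ I rotate the ray $e^{\frac{\pi\i}{4}}\r^{+}$ down to $\r^{+}$: the poles of $1/\cosh$ sit on $\i\r$, hence none lie in the swept sector $\{0\le\arg x\le\pi/4\}$, and the arc at infinity vanishes because $e^{-2kx}$ with $k\ge1$ dominates the growth of $\cosh(2zx)$ near $\arg x=0$ while the Gaussian $e^{\frac{\i\tau x^2}{\pi}}$ decays away from it. On $\r^{+}$ the two halves of $\cosh(2zx)e^{-2kx}$ give $\tfrac{(-1)^k}{\pi}\big(J(k+z,\tau)+J(k-z,\tau)\big)$ by \eqref{def_big_Jn_z_tau}, which is \eqref{hnztau_computing}.

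For part (ii) I would take $k=K=2+2\lceil\ln(\tfrac1\epsilon)\rceil$ in (i); since $K$ is even, $(-1)^K=1$, matching \eqref{formula_dzj_h_z_tau}. It then remains to approximate $J(K\pm z,\tau)=\int_0^\infty e^{-2(K\pm z)x}f_\tau(x)\,\d x$. As $K\pm z\ge\tfrac12$ is large I first truncate the range to $[0,1]$: for real $\tau$ one has $|f_\tau(x)|=1/\cosh x\le 2e^{-x}$ on $x>0$, so the discarded part is $O(e^{-2K})$. On $[0,1]$, which lies inside the disc of convergence $|x|<\pi/2$ of the expansion \eqref{def_f_tau_z}, I replace $f_\tau$ by its truncated Taylor polynomial $\sum_{l=0}^{K}q_l(\tau)x^{2l}$; integrating term by term against $e^{-2(K\pm z)x}$ gives exactly $\sum_{l=0}^{K}q_l(\tau)\,p_{2l}(2(K\pm z))$ by \eqref{def_pk_x}. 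Collecting the four error contributions and multiplying by $1/\pi$ then produces the claimed remainder ${\mathcal E}$.

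The main obstacle is the final error estimate, namely bounding the Taylor tail $\sum_{l>K}|q_l(\tau)|$ \emph{uniformly} in $\tau\in(0,1)$. From \eqref{q_k_tau_formula} together with the growth rate $|\tilde E_{2m}|\le C(2/\pi)^{2m}$ of the normalised Euler numbers in \eqref{def_Bk_Ek2}, I expect $|q_l(\tau)|\le C'(4/\pi^2)^{l}$ for all $\tau\in(0,1)$ (the $j=0$ term dominates and $\sum_j(\pi/4)^j/j!$ is bounded), so the tail decays geometrically with ratio $4/\pi^2\approx0.405$. Since $K\ge 2\ln(\tfrac1\epsilon)$, this tail, and with it the total accumulated error, is smaller than $\epsilon$ once $\epsilon<\tfrac1{10}$. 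Tracking the absolute constants carefully enough to secure the strict inequality $|{\mathcal E}|<\epsilon$ is the one genuinely delicate point; by contrast the contour rotation in (i) is routine once the arc estimate sketched above is made precise.
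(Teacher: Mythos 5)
Your proposal follows essentially the same route as the paper's proof: the same finite geometric expansion of $1/\cosh$ inserted into the representation \eqref{def_hztau_2} (your rescaling $x=\pi e^{\frac{\pi \i}{4}}y$ is cosmetic), the same Gaussian-to-error-function evaluation of the finite sum and contour rotation of the tail for part (i), and for part (ii) the same splitting of $J(K\pm z,\tau)$ at $x=1$, termwise integration of the Taylor series of $f_\tau$, and Euler-number bound $|q_l(\tau)|=O\left((2/\pi)^{2l}\right)$ giving a geometric tail dominated by $K\ge 2\ln(\tfrac{1}{\epsilon})$. The constant-tracking you flag as the delicate point is precisely what the paper carries out explicitly, proving $|\tilde E_{2n}|<2(2/\pi)^{2n+1}$, hence $|q_l(\tau)|<3(2/\pi)^{2l}$, and bounding each error contribution by $\epsilon/4$.
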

\begin{proof}
Let us prove part (i). 
We denote $\theta:=e^{\frac{\pi \i}{4}}$, use the second integral representation in \eqref{def_hztau_2} and the identity
\beqq
\cosh(\pi \theta y)^{-1}=2\sum\limits_{l=0}^{k-1} (-1)^l e^{-(2l+1)\pi  \theta y}+(-1)^k e^{-2\pi k  \theta y}\cosh(\pi  \theta y)^{-1},
\eeqq
and obtain
\beq\label{prop_proof1}
h(z,\tau)=4\theta\sum\limits_{l=0}^{k-1} (-1)^l \int_0^{\infty} 
e^{-\pi \tau y^2-(2l+1)\pi \theta y} \cosh(2\pi z \theta y)\d y+
2\theta(-1)^k \int_0^{\infty} e^{-\pi \tau y^2 -2\pi k \theta y}
\frac{\cosh(2\pi z \theta  y)}{\cosh(\pi  \theta y)} \d y.
\eeq
The integrals in the sum in the right-hand side of 
\eqref{prop_proof1} can be evaluated explicitly by applying formula (3.322.2) in \cite{Jeffrey2007}; this gives us the terms 
$H_k(z,\tau)+H_k(-z,\tau)$. To deal with 
the remaining integral in the right-hand side of
\eqref{prop_proof1}, we rotate the contour of integration $\r^+ \mapsto e^{-\frac{\pi \i}{4}} \r^+$ and 
change the variable of integration $y=x/(\pi \theta)$. This gives us
\beqq
2\theta\int_0^{\infty} e^{-\pi \tau y^2 -2\pi k \theta y}
\frac{\cosh(2\pi z \theta  y)}{\cosh(\pi  \theta y)} \d y=\frac{1}{\pi} 
\left(J(k+z,\tau)+J(k-z,\tau) \right),
\eeqq
and ends the proof of the identity \eqref{hnztau_computing} and part (i) of the proposition.

Let us prove part (ii). We write 
\beq\label{J_K_two_integrals}
 J(K+z,\tau)=I_1+I_2=\int_{0}^{1} 
e^{-2x(K+z)} f_{\tau}(x)  \d x+
\int_{1}^{\infty} 
e^{-2x(K+z)} f_{\tau}(x)  \d x.
\eeq
Using the fact that $|z|\le 1/2$ and $|f_{\tau}(x)|<1$ for $x>0$, the second integral in the right-hand side of \eqref{J_K_two_integrals} can be  bounded from above as
\beq\label{estimate_second_integral}
\left \vert I_2 \right \vert
&<&\int_{1}^{\infty} 
e^{-x(2K-1)} \d x=\frac{e^{-2K+1}}{2K-1}< \frac{\epsilon}{4}.
\eeq

In order to deal with the first integral in the right-hand side of \eqref{J_K_two_integrals}, we expand $f_{\tau}(x)$ in Taylor series
in $x$ (which converges for $|x|<\pi/2$, see \eqref{def_f_tau_z}) and obtain
\beq\label{tail_series}
I_1= \sum\limits_{l\ge 0} q_l(\tau) 
\int_{0}^{1} e^{-2x(K+z)} x^{2l} \d x=
\sum\limits_{l\ge 0} q_l(\tau) p_{2l}(2(K+z)).
\eeq
In order to estimate the tail of the above series, first we will need to establish an upper bound for $q_k(\tau)$.
From the following identity for Euler numbers (see formulas 9.652.3 and 9.655.3 in \cite{Jeffrey2007})
\beqq
\tilde E_{2n}=\frac{E_{2n}}{(2n)!}=2(-1)^n \left(\frac{2}{\pi} \right)^{2n+1} \sum\limits_{k\ge 0} \frac{(-1)^k}{(2k+1)^{2n+1}}
\eeqq
we conclude that $|\tilde E_{2n}|<2\left(\frac{2}{\pi} \right)^{2n+1}$. Using formula \eqref{q_k_tau_formula} and the fact that $|\tau|<1$ 
we obtain
\beqq
|q_k(\tau)| \le \sum\limits_{j=0}^k |\tilde E_{2k-2j}|\frac{1}{j!}\left(\frac{|\tau|}{\pi}\right)^j
<\sum\limits_{j=0}^k 2\left(\frac{2}{\pi} \right)^{2k-2j+1} \frac{1}{j!}\left(\frac{1}{\pi}\right)^j<
2\left(\frac{2}{\pi} \right)^{2k+1} e^{\frac{\pi}{4}}< 3\left(\frac{2}{\pi} \right)^{2k}.
\eeqq
Using the above result we estimate the tail of the series in \eqref{tail_series} as follows
\beq\label{estimate_tail}
\left \vert \sum\limits_{l > K} q_l(\tau) p_{2l}(2(K+z)) \right \vert <
3 \sum\limits_{l > K} \left(\tfrac{2}{\pi} \right)^{2l}<\frac{\epsilon}{4}.
\eeq
Formulas \eqref{J_K_two_integrals}, \eqref{estimate_second_integral}, \eqref{tail_series} and \eqref{estimate_tail} 
show that for all $|z|\le 1/2$ and $\tau \in (0,1)$ 
\beqq
 J(K+z,\tau)=
 \sum\limits_{0 \le l \le K} q_l(\tau) p_{2l}(2(K+z))+ {\mathcal E_1},
\eeqq
where $|{\mathcal E}_1|<\frac{\epsilon}{2}$. The above statement combined with  \eqref{hnztau_computing} gives us the desired result \eqref{formula_dzj_h_z_tau}.
\end{proof}

\begin{proposition}\label{prop_Phi}
There exists an algorithm such that for any  $\epsilon \in (0,\tfrac{1}{10})$ and $x \in \r$ the value of 
$\Phi(e^{\frac{\pi i}{4}} x)$ can be evaluated to within $\pm \epsilon$ using 
$\le A_1 \ln\left(\tfrac{1}{\epsilon}\right)$ arithmetic operations on numbers of $\le A_2 \ln\left(\tfrac{1}{\epsilon}\right)$ bits.
The algorithm requires $\le A_3 \ln\left(\tfrac{1}{\epsilon}\right)$ bits of memory. 
\end{proposition}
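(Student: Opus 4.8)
The plan is to reduce to the case $x\ge 0$ using that $\Phi$ is odd (so $\Phi(e^{\frac{\pi \i}{4}}x)=-\Phi(e^{\frac{\pi \i}{4}}(-x))$), and then to split the range of $x$ at a threshold $X:=\sqrt{2\ln(\tfrac{1}{\epsilon})}$, handling small $x$ by a convergent Taylor series and large $x$ by a divergent (but rigorously controlled) asymptotic expansion. The organizing observation is that on the ray $\arg z=\tfrac{\pi}{4}$ one has $z^2=\i x^2$, so $e^{-z^2}=e^{-\i x^2}$ has modulus one; this is exactly what makes the naive series expensive for large $x$ (catastrophic cancellation) and forces the second regime.

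For $0\le x\le X$ I would truncate the Taylor series \eqref{series_for_Phi} at $z=e^{\frac{\pi \i}{4}}x$ after $N=O(\ln(\tfrac{1}{\epsilon}))$ terms, generating consecutive terms by the obvious recursion at $O(1)$ cost each. Two elementary estimates are needed. First, the factor $1/n!$ forces the tail below $\epsilon$ once $n$ exceeds a fixed multiple of $x^2\le 2\ln(\tfrac{1}{\epsilon})$, so $N=O(\ln(\tfrac{1}{\epsilon}))$ terms suffice. Second, the largest term in the sum has modulus at most $e^{x^2}\le \epsilon^{-2}$, so carrying the computation in $O(\ln(\tfrac{1}{\epsilon}))$ bits of precision absorbs the internal cancellation and still yields absolute error $<\epsilon$. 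Since only a constant number of $O(\ln(\tfrac{1}{\epsilon}))$-bit quantities are held at any moment, the stated memory bound follows.

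For $x>X$ I would instead start from the identity $\Phi(e^{\frac{\pi \i}{4}}x)=1-\tfrac{2}{\sqrt{\pi}}e^{\frac{\pi \i}{4}}\int_x^{\infty}e^{-\i s^2}\,\d s$, obtained by substituting $t=e^{\frac{\pi \i}{4}}s$ in the definition of $\Phi$ and using $\int_0^{\infty}e^{-\i s^2}\,\d s=\tfrac{\sqrt{\pi}}{2}e^{-\frac{\pi \i}{4}}$, and develop the tail integral by repeated integration by parts. Writing $e^{-\i s^2}=\tfrac{\i}{2s}\tfrac{\d}{\d s}e^{-\i s^2}$ and integrating by parts $N$ times produces the asymptotic expansion of the complementary error function together with an explicit remainder integral of the form $\int_x^{\infty}e^{-\i s^2}s^{-2N}(\cdots)\,\d s$. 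Because $|e^{-\i s^2}|=1$, the modulus of this remainder is bounded by the modulus of the first omitted term, which is $O\big((2N-1)!!/(2x^2)^N\big)$; for $x>X=\sqrt{2\ln(\tfrac{1}{\epsilon})}$ this falls below $\epsilon$ once $N=O(\ln(\tfrac{1}{\epsilon}))$, while all retained terms have modulus $\le 1$, so there is no cancellation and $O(\ln(\tfrac{1}{\epsilon}))$ bits again suffice. If $x$ is so large that the whole tail integral is already $<\epsilon$ in modulus (which happens once $x>C/\epsilon$, since it is $O(1/x)$), the algorithm simply returns the value $1$.

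The main obstacle is making the remainder bound for the asymptotic expansion rigorous and uniform precisely on the critical ray $\arg z=\tfrac{\pi}{4}$, which sits on the boundary of the sector where the classical $\mathrm{erfc}$ error bounds are cleanest; the integration-by-parts representation makes this self-contained, since the oscillatory factor contributes only a unit modulus. The remaining point is to verify that the two regimes glue correctly at $x=X$, so that in each of them both the term count and the required working precision are genuinely $O(\ln(\tfrac{1}{\epsilon}))$. This is a routine but slightly delicate comparison of $x^2$ against $\ln(\tfrac{1}{\epsilon})$ at the threshold, using $e^{x^2}\le\epsilon^{-2}$ on the Taylor side and $2x^2\ge 4\ln(\tfrac{1}{\epsilon})$ on the asymptotic side. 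Combining the two regimes gives the claimed $O(\ln(\tfrac{1}{\epsilon}))$ arithmetic operations on $O(\ln(\tfrac{1}{\epsilon}))$-bit numbers within $O(\ln(\tfrac{1}{\epsilon}))$ bits of memory.
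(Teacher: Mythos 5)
Your proposal is correct in outline, and its large-$x$ regime follows a genuinely different route from the paper. The paper splits at the fixed threshold $x=1$: on $(0,1]$ it sums the Taylor series \eqref{series_for_Phi}, where no cancellation occurs, and on $(1,\infty)$ it uses the Hunter--Regan trapezoidal-type representation \eqref{hunter_regan_n1}, whose error $e^{-\pi^2/h^2}$ is driven by the step size $h$ alone; the two-case choice \eqref{def_h_x} keeps that error below $\epsilon^4$ while steering clear of the singular configuration $h\,\re(z)=\pi$ and of the correction term $R(z,h)$, and the series is truncated after $N=\lceil 4\ln(\tfrac{1}{\epsilon})\rceil$ terms. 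You instead use the integration-by-parts asymptotic expansion of the tail Fresnel integral, which forces an $\epsilon$-dependent threshold $X\asymp\sqrt{\ln(\tfrac{1}{\epsilon})}$: for a fixed moderate $x$ the asymptotic series bottoms out at size roughly $e^{-x^2}$ and can never reach accuracy $\epsilon$, which is precisely why the paper could not split at a point growing only like yours does not---and why your choice $X=\sqrt{2\ln(\tfrac{1}{\epsilon})}$, giving $e^{-x^2}\le\epsilon^2$ beyond the threshold, is essential. You also correctly pay the resulting price on the Taylor side, where intermediate terms grow to $e^{x^2}\le\epsilon^{-2}$ and a constant-factor increase in the $O(\ln(\tfrac{1}{\epsilon}))$-bit working precision absorbs the cancellation. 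Your route buys self-containedness (no quadrature formula is imported; the remainder analysis needs only $|e^{-\i s^2}|=1$), while the paper's buys a cancellation-free small-$x$ regime and a single mechanism valid uniformly on all of $(1,\infty)$. Both deliver the stated operation, precision and memory bounds.

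One point to tighten: on the ray $\arg z=\tfrac{\pi}{4}$, the assertion that the remainder is bounded by the modulus of the first omitted term is not what your triangle-inequality estimate actually yields, and the classical first-omitted-term theorem for the asymptotic series of the complementary error function is usually stated for the open sector $|\arg z|<\tfrac{\pi}{4}$, with your ray exactly on the boundary. The crude bound $\left|\int_x^{\infty} e^{-\i s^2} s^{-2N}\,\d s\right|\le \frac{x^{1-2N}}{2N-1}$ overshoots the first omitted term $\frac{(2N-1)!!}{2^{N+1}x^{2N+1}}$ by the factor $\frac{2x^2}{2N-1}$; alternatively, substituting $u=s^2$ and applying the second mean value theorem to the decreasing weight $u^{-N-\frac{1}{2}}$ gives a remainder bound equal to twice the first omitted term. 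Either explicit version suffices for the proposition, since with $x\ge X$ and $N=\lceil\ln(\tfrac{1}{\epsilon})\rceil$ the ratio of consecutive term moduli, $\frac{2m+1}{2x^2}$, stays near or below $\tfrac{1}{2}$ throughout the retained range, so the remainder is still $O(\epsilon)$---but you should state one of these bounds rather than the first-omitted-term heuristic.
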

\begin{proof}
Since $\Phi(e^{\frac{\pi i}{4}} x)$ is an odd function, we can restrict $x$ to be a positive number. 
When $x \in (0,1]$ we compute $\Phi(e^{\frac{\pi i}{4}} x)$ using Taylor series 
\eqref{series_for_Phi}. Since this series is converging exponentially fast, it can be truncated after $O(\ln(\tfrac{1}{\epsilon}))$ terms in 
order to achieve accuracy $\pm \epsilon$. 

Let us consider the case when $x \in (1,\infty)$. We will use the following result (see equations (3)-(6) in \cite{Hunter_Regan}):
For $z\in \c$ with $\re(z)>0$ 
\beq\label{hunter_regan_n1}
\Phi(z)=1-\frac{hze^{-z^2}}{\pi} \sum\limits_{k=-\infty}^{\infty} \frac{e^{-k^2 h^2}}{z^2+k^2h^2}+
R(z,h)+E(z,h),
\eeq
where $h>0$ and $h \re(z) \ne \pi$. Here
$R(z,h):=2(e^{\frac{2\pi z}{h}}-1)^{-1}$ if $\re(z)<\frac{\pi}{h}$ and $R(z,h):=0$ otherwise.
The error term $E(z,h)$ can be bounded from above as
\beq\label{hunter_regan_n1}
|E(z,h)|\le \frac{2 |ze^{-z^2}| e^{-\frac{\pi^2}{h^2}}}{\sqrt{\pi} (1-e^{-2\frac{\pi^2}{h^2}}) \big|(\re(z))^2-\frac{\pi^2}{h^2}\big|}.
\eeq
Let us define $\gamma:=\sqrt{\ln\left(\frac{1}{\epsilon}\right)}\;$, $z:=e^{\frac{\pi \i }{4}}x$ and 
\beq\label{def_h_x}
h:=
\begin{cases}
&\frac{\pi}{2\gamma}, \;\;\; \textnormal{if} \;\; x \le  2\gamma \; {\textnormal{or}} \; x \ge 4\gamma, \\
&\frac{\pi}{4\gamma},  \;\;\; \textnormal{if} \;\; x \in (2\gamma,4\gamma).
\end{cases}
\eeq 
This choice of $h$ implies $e^{-\frac{\pi^2}{h^2}} \le \epsilon^4$ and
\beqq
\frac{|z|}{\big|(\re(z))^2-\frac{\pi^2}{h^2}\big|}=\frac{x^{-1}}{\big |\frac{1}{2}-\frac{\pi^2}{x^2h^2}\big|}
<4,
\eeqq
therefore 
\beqq
|E(z,h)|<\frac{8 \epsilon^4}{(1-\epsilon^8)}< \frac{\epsilon}{2}.
\eeqq
Next, we define $N=\lceil 4 \gamma^2 \rceil$ and we estimate the tail of the series in \eqref{hunter_regan_n1} as 
\beqq
\left|h ze^{-z^2} \sum\limits_{n\ge N+1} \frac{e^{-k^2h^2}}{z^2+k^2h^2} \right |
< h \sum\limits_{n \ge N+1} khe^{-k^2h^2}<\int_{Nh}^{\infty} ue^{-u^2} \d u=\frac{1}{2} e^{-(Nh)^2}<\frac{\epsilon^4}{2}<\frac{\epsilon}{4}.
\eeqq
The above results show that for every $x>1$ we can choose $h$ according to \eqref{def_h_x} and obtain
\beq\label{Phi_final_formula}
\Phi\left(e^{\frac{\pi \i}{4}} x\right)=1-\frac{h e^{\frac{\pi \i}{4}} x e^{-\i x^2}}{\pi} \left(-\frac{\i}{x^2} + 2\sum\limits_{k=1}^{N} \frac{e^{-k^2 h^2}}{\i x^2+k^2h^2}\right)+{\mathcal E},
\eeq
where $|{\mathcal E}|<\epsilon$. Since the number of terms in the above sum is $N=\lceil 4 \ln\left(\frac{1}{\epsilon}\right) \rceil$, this ends the proof of
Proposition \eqref{prop_Phi} in the case $x \in (1,\infty)$. 
\end{proof}

\begin{proposition}\label{prop_compute_h_z_tau2}
There exists an algorithm such that for any  $\epsilon \in (0,\tfrac{1}{10})$, $|z|<10$ and $\tau \in (0,1)$ the value of  $h(z,\tau)$ can be evaluated to within $\pm \epsilon/\sqrt{\tau}$ using 
$\le A_4 \ln\left(\tfrac{1}{\epsilon} \right)^2$ arithmetic operations on numbers of $\le A_5 \ln\left(\tfrac{1}{\epsilon} \right)$ bits. The algorithm requires $\le A_6 \ln\left(\tfrac{1}{\epsilon}\right)^2$ bits of memory. 
\end{proposition}
\begin{proof}
All computations will be performed on numbers of $A_3 \ln\left(\tfrac{1}{\epsilon}\right)$ bits, where $A_3$ is the constant from Proposition \ref{prop_Phi}. We set $K=2+\lceil 2 \ln(\tfrac{1}{\epsilon})\rceil$.  
The first step is to pre-compute and store in the memory the values of $\tilde E_k$ for $0\le k \le 2K$. These numbers can be computed recursively (via formula 9.631 in \cite{Jeffrey2007}), this computation will require $O(K^2)$ arithmetic operations and $O(K^2)$ bits of memory. 
The second step is to use identity \eqref{hztau_period_1} and to normalize $z$ so that $|z|\le 1/2$ (note that 
this will require $O(|z|)$ arithmetic operations -- we will need this fact later). The thid step is to apply
formula \eqref{formula_dzj_h_z_tau}. According to \eqref{def_big_Hn_z_tau} and Proposition \ref{prop_Phi}, the computation of $H_K(\pm z, \tau)$
to the accuracy of $\pm \epsilon/\sqrt{\tau}$ can be achieved in $O(K^2)$ arithmetic operations using $O(K)$ bits of memory. 
We claim that the computation of the finite sum in \eqref{formula_dzj_h_z_tau} to the accuracy $\pm \epsilon$ can also be done in $O(K^2)$ arithmetic operations using $O(K)$ bits of memory (provided that we are using the pre-computed values of $\tilde E_k$). 
This follows from the fact that the coefficients $q_k(\tau)$ can be computed via \eqref{q_k_tau_formula} in $O(k)$ computations, 
while the values of $p_k(x)$ can be evaluated recursively via the identity
\beqq
p_k(x)=\frac{1}{x}\left(kp_{k-1}(x)-e^{-x}\right), \;\;\; k\ge 1, 
\eeqq
which follows easily from \eqref{def_pk_x} by integration by parts. Note that the above recurrence identity is numerically stable as long as $|x| \ge k$, which is true in formula \eqref{formula_dzj_h_z_tau}.
\end{proof}

\begin{proposition}\label{proposition_small_tau}
There exists an algorithm such that for any integer $n\ge 1$, $\epsilon \in (0,\tfrac{1}{10})$, $|z|\le 1/2$ and $|\tau| < n^{-4}$  the value of the function $F_n(z,\tau)$ can be evaluated  
to within $\pm \epsilon$ using 
$\le A_7 \ln\left(\tfrac{n}{\epsilon} \right)^3$ arithmetic operations on numbers of $\le A_8 \ln\left(\tfrac{n}{\epsilon} \right)$ bits.
The algorithm requires $\le A_9 \ln\left(\tfrac{n}{\epsilon}\right)^2$ bits of memory.  
\end{proposition}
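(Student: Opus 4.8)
The plan is to expand the Gaussian factor $e^{2\pi\i\tau k^2}$ in powers of $\tau$ and thereby reduce the computation of $F_n(z,\tau)$ to that of finitely many closed-form ``twisted moment sums''. Substituting $e^{2\pi\i\tau k^2}=\sum_{j\ge0}\frac{(2\pi\i\tau)^j}{j!}k^{2j}$ into \eqref{def_F} and interchanging the order of summation gives
\[
F_n(z,\tau)=\sum_{j\ge0}\frac{(2\pi\i\tau)^j}{j!}\,S_{2j}(z),\qquad
S_m(z):=\sum_{k=0}^n k^m e^{2\pi\i z k}.
\]
Since $|S_m(z)|\le (n+1)n^m$, the $j$-th term is bounded by $\frac{(2\pi|\tau|n^2)^j}{j!}(n+1)$, and the hypothesis $|\tau|<n^{-4}$ forces $2\pi|\tau|n^2<2\pi n^{-2}$. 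For $n\ge4$ this quantity is $<1/2$, so the super-exponential decay of $1/j!$ shows that truncating the series at $j=J:=\lceil C\ln(\tfrac{n}{\epsilon})\rceil$ produces an error below $\epsilon/2$ for a suitable absolute constant $C$; the finitely many cases $n\le3$ are disposed of by direct summation in $O(1)$ operations.

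It then remains to evaluate $S_{2j}(z)$ for $0\le j\le J$ without spending $O(n)$ operations. The key observation is that $S_m(z)$ is obtained from the geometric sum by differentiation: with $g(s):=\sum_{k=0}^n e^{sk}=\frac{e^{s(n+1)}-1}{e^s-1}$ one has $S_m(z)=g^{(m)}(s_0)=m!\,[t^m]\,g(s_0+t)$ at $s_0:=2\pi\i z$. Hence all of $S_0,\dots,S_{2J}$ can be read off from a single truncated power series for $g(s_0+t)$, obtained by forming the order-$2J$ expansions of the numerator $e^{s_0(n+1)}e^{(n+1)t}-1$ and the denominator $e^{s_0}e^{t}-1$ and performing one power-series division, at a cost of $O(J^2)$ arithmetic operations. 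Substituting the results into the truncated $\tau$-series and summing costs a further $O(J)$ operations, so the total count is $O(\ln(\tfrac{n}{\epsilon})^2)$, comfortably within the claimed bound; the working storage is $O(J)$ coefficients of $O(\ln(\tfrac{n}{\epsilon}))$ bits each, i.e.\ $O(\ln(\tfrac{n}{\epsilon})^2)$ bits of memory.

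The two points requiring care, and where the real work lies, are numerical. First, the naive moments $S_{2j}$ are as large as $n^{2J}$, i.e.\ have $O(\ln(\tfrac{n}{\epsilon})\ln n)$ bits, which would violate the requirement that all numbers carry only $O(\ln(\tfrac{n}{\epsilon}))$ bits. I would remedy this by performing the power-series computation in the rescaled variable $t=\sigma/n$: the coefficients of $g(s_0+\sigma/n)$ equal $\tfrac{1}{m!\,n^m}S_m(z)$, which are bounded by $(n+1)/m!$ and thus fit in $O(\ln(\tfrac{n}{\epsilon}))$ bits, while the rescaled numerator coefficients $e^{s_0(n+1)}(1+\tfrac1n)^m/m!$ and denominator coefficients $e^{s_0}/(n^m m!)$ are likewise bounded. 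The scale factors $n^{2j}$ are reabsorbed through the combination $(2\pi\i\tau n^2)^j$, whose modulus is $\le(2\pi n^{-2})^j\le1$, so that every term of the final sum stays $O(\ln(\tfrac{n}{\epsilon}))$ bits.

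The main obstacle is the stability of the division by $e^{s}-1$ when $w=e^{2\pi\i z}$ is close to $1$, i.e.\ when $z$ is near $0$: the denominator $e^{s_0}-1$ then has modulus $\approx2\pi|z|$ and the inversion is ill-conditioned, even though $g$ is entire and $S_m(z)$ is perfectly well defined. As long as $|z|$ is not smaller than an inverse polynomial in $\tfrac{n}{\epsilon}$, the division loses only $O(\ln(\tfrac{n}{\epsilon}))$ bits and is absorbed by a fixed number of guard digits, keeping the arithmetic within $O(\ln(\tfrac{n}{\epsilon}))$ bits. For the remaining sliver of extremely small $|z|$, where $e^{2\pi\i z k}\approx1$, I would instead evaluate $F_n(z,\tau)$ by Euler--Maclaurin summation (the method already used for the regime $|\tau|<1/n$, see Section~3.2 of \cite{Hiary_theta}): the summand varies slowly, its endpoint derivatives are produced by the standard recursion in $O(J^2)$ operations, the tail after $J=O(\ln(\tfrac{n}{\epsilon}))$ terms is negligible because $|z|\le1/2$ gives a contraction of order $4^{-J}$, and the single integral $\int_0^n e^{2\pi\i(zx+\tau x^2)}\,\d x$ is computed either through its own $\tau$-expansion or through the error function of Proposition~\ref{prop_Phi}. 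Carrying out this case split, checking that each branch stays within $O(\ln(\tfrac{n}{\epsilon}))$-bit arithmetic, and bookkeeping the accumulated round-off constitutes the bulk of the remaining (routine but lengthy) verification; the slack between the $O(\ln(\tfrac{n}{\epsilon})^2)$ count obtained here and the stated $O(\ln(\tfrac{n}{\epsilon})^3)$ leaves ample room for any of these refinements.
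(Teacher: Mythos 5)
Your main branch coincides in substance with the paper's treatment of the case $|z|>n^{-1}$: both expand $e^{2\pi \i \tau k^2}$ in powers of $\tau$, realize the twisted moments $S_{2j}(z)$ as derivatives of the geometric sum $\frac{e^{2\pi \i z(n+1)}-1}{e^{2\pi \i z}-1}$, and rescale by powers of $n$ to keep all stored quantities within $O(\ln(\tfrac{n}{\epsilon}))$ bits; your single power-series division is an equivalent reorganization of the paper's Leibniz-rule computation \eqref{Leibniz_1} together with the coefficient recursion \eqref{recursion_a_k_i}. Your small-$|z|$ branch genuinely differs: the paper expands the whole exponential $e^{2\pi \i(zk+\tau k^2)}$ as in \eqref{F_n_small_z_small_tau} and reduces everything to Bernoulli-number power sums $S_j(n)$ via Faulhaber's formula --- elementary, fully rigorous, and the source of the stated $O(\ln(\tfrac{n}{\epsilon})^3)$ operation count --- whereas you invoke Euler--Maclaurin summation, which is plausible and even cheaper, but whose remainder bounds and whose integral $\int_0^n e^{2\pi \i(zx+\tau x^2)}\,\d x$ you leave unverified. (One concrete point there: the recursion $p_k(x)=\frac{1}{x}(kp_{k-1}(x)-e^{-x})$ used elsewhere in the paper is stable only for $|x|\ge k$, and in your regime the relevant argument is $|x|=2\pi n|z|\le 2\pi\ll k$, so the moment integrals in a $\tau$-expansion of the Euler--Maclaurin integral would have to be computed by direct Taylor series instead.)

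The genuine flaw is quantitative: your claim that the division ``loses only $O(\ln(\tfrac{n}{\epsilon}))$ bits'' whenever $|z|$ exceeds an inverse polynomial in $\tfrac{n}{\epsilon}$ is false. In the rescaled variable the division recursion propagates a round-off error in the coefficient $c_{m-1}$ to $c_m$ with factor roughly $|D_1/D_0|\approx (2\pi n|z|)^{-1}$, so over $2J$ coefficients round-off inflates by $(2\pi n|z|)^{-2J}=\exp\bigl(2J\ln\tfrac{1}{2\pi n|z|}\bigr)$. At $|z|=(\epsilon/n)^{c}$ with $J=O(\ln(\tfrac{n}{\epsilon}))$ this is $\exp\bigl(\Theta(\ln(\tfrac{n}{\epsilon})^2)\bigr)$, forcing $\Theta(\ln(\tfrac{n}{\epsilon})^2)$-bit working precision and violating the $A_8\ln(\tfrac{n}{\epsilon})$-bit requirement of the proposition; the boundedness of the exact output coefficients $S_m(z)/(m!\,n^m)$ does not make the recursion backward stable. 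The crossover must instead sit at $|z|\asymp n^{-1}$, which is exactly the content of the paper's bound \eqref{bound_sin_cos}: $n|e^{2\pi\i z}-1|\ge 4$ for $n^{-1}<|z|\le 1/2$ keeps every propagation factor $\le 1/4$. With this repair your architecture survives --- the Euler--Maclaurin branch must then absorb the entire range $|z|<n^{-1}$, where it is indeed comfortable since the phase derivative $z+2\tau x$ is $O(n^{-1})$ on $[0,n]$ --- but the burden of rigor then falls on precisely the branch you describe as routine and do not carry out, and which the paper settles instead with the explicit Bernoulli-sum computation.
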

\begin{proof}


The main idea behind this algorithm is to expand the exponential function in Taylor series, however the details of the implementation will
be different depending on whether $|z|>n^{-1}$ or $|z|<n^{-1}$. 
Let us consider the first case, when $|z|>n^{-1}$. We expand $\exp(2\pi \i \tau k^2)$ in Taylor series and obtain
\beqq
F_n(z,\tau)
 =\sum\limits_{l=0}^{\infty} \frac{(2\pi \i)^l}{l!} \left[
 \sum\limits_{k=0}^n  (\tau k^2)^l e^{2\pi \i z k} \right].
\eeqq
Since $|\tau| <n^{-4}$, the absolute value of the sum in the square brackets is bounded from above by $(n+1)$. Therefore, the external sum in $l$ is converging exponentially fast, and in order to achieve accuracy $\pm\epsilon$ we can truncate it after $L \le A_{10}\ln(\tfrac{n}{\epsilon})$ terms for some constant $A_{10}$. We assume that $L^3 < n$, otherwise we will compute $F_n(z,\tau)$ by direct summation. 
Our goal is to show that the sum in the square brackets can be evaluated with accuracy $\pm\epsilon$ in 
$O(L^2)$ operations on numbers of $O(L)$ bits using $O(L^2)$ bits of memory. 
The main idea is to rewrite this sum as follows
\beqq
\sum\limits_{k=0}^n  (\tau k^2)^l e^{2\pi \i z k}&=&
 \frac{\tau^l}{(2\pi \i)^{2l}} \times  \frac{\d^{2l}}{\d z^{2l}} \sum\limits_{k=0}^n  e^{2\pi \i z k}
=
 n\frac{(\tau n^4)^l}{(2\pi \i)^{2l}}  \times n^{-4l-1}
\frac{\d^{2l}}{\d z^{2l}} \left[ \frac{e^{2\pi \i z(n+1)}-1}{e^{2\pi \i z}-1}\right].
\eeqq
Using Leibniz rule we find that
\beq\label{Leibniz_1}
n^{-4l-1}
\frac{\d^{2l}}{\d z^{2l}} \left[ \frac{e^{2\pi \i z(n+1)}-1}{e^{2\pi \i z}-1}\right]
=\sum\limits_{j=0}^{2l} \binom{2l}{j} f_j(z) g_{2l-j}(z), 
\eeq
where we have defined 
\beqq
 f_j(z):=n^{-2j-1}\frac{\d^j}{\d z^j}\left[\frac{1}{e^{2\pi \i z}-1}\right],
\eeqq
and
\beqq
 g_j(z):=n^{-2j} \frac{\d^{j}}{\d z^{j}} \left[ e^{2\pi \i z(n+1)}-1 \right]=
 n^{-2j}\left((2\pi \i (n+1))^j e^{2\pi \i z(n+1)}-{\bf 1}_{\{j=0\}}\right).
\eeqq
While the computation of $g_j(z)$ is straighforward, the computation of $f_j(z)$ requires more work. First, we check by induction that 
\beq\label{compute_f_j}
f_j(z)=\sum\limits_{k=1}^{j+1} a_{j,k} (n(\exp(2\pi \i z)-1))^{-k}, 
\eeq
where $a_{0,1}=1$ and the remaining coefficients $a_{j,k}$ can be computed by the recursion 
\beq\label{recursion_a_k_i}
a_{j+1,k}=-\frac{2\pi \i}{n} \left((k-1)a_{j,k-1}+ \frac{k}{n} a_{j,k} {\bf 1}_{\{k\le j+1\}}\right), \;\;\; j\ge 1, \; 1\le k \le j+2. 
\eeq
From \eqref{recursion_a_k_i} one can see by induction that $|a_{j,k}|<(4\pi j/n)^j<1$ (recall that $j<2L<2 n^{\frac{1}{3}}$). 
Note that the condition $n^{-1}<|z|\le 1/2$ implies 
\beq\label{bound_sin_cos}
n|\exp(2\pi \i z)-1|>\max(n|\sin(2\pi z)|,n|\cos(2\pi z)-1|) \ge 4,
\eeq
since $|\sin(2\pi z)| \ge 4|z|$ if $|z| \le 1/4$ and $|\cos(2\pi z)-1|>1$ if $1/4<|z|<1/2$. Given \eqref{bound_sin_cos}
and the above upper bound on the coefficients $a_{j,k}$, it is clear that formula \eqref{compute_f_j} provides a numerically stable way of computing $f_j(z)$ using numbers of 
$O(L)$ bits. The memory requirement is $O(L^2)$ bits, since in order to compute the coefficients $a_{j+1,k}$, $1\le k \le j+2$ via \eqref{recursion_a_k_i} we need to store at most $2L+1$ numbers $a_{j,k}$, $1\le k \le j+1$. 

When $|z|<n^{-1}$ the lower bound \eqref{bound_sin_cos} is no longer valid, and we have to proceed by a different route. In this case
we expand the exponential function $\exp(2\pi \i  (zk+\tau k^2))$ in Taylor series and obtain
\beq\label{F_n_small_z_small_tau}
F_n(z)=\sum\limits_{l\ge 0} \frac{(2\pi \i)^l}{l!} \left[\sum\limits_{k=0}^n (zk+\tau k^2)^l \right].
\eeq
Since $|z|<n^{-1}$ and $|\tau| \le n^{-4}$, the sum in the square brackets is bounded from above by $(n+1)2^l$. Therefore, the sum in $l$ is converging exponentially fast, and in order to achieve accuracy $\pm\epsilon$ we can truncate it after $L=O\left(\ln\left(\tfrac{n}{\epsilon}\right)\right)$ terms. 
The sum in the square brackets in \eqref{F_n_small_z_small_tau} can be computed as follows
\beqq
\sum\limits_{k=0}^n (zk+\tau k^2)^l=\sum\limits_{j=0}^l \binom{l}{j} (nz)^{l-j} (n^2\tau)^{j} S_{l+j}(n),
\eeqq
where we have defined $S_{j}(n):=n^{-j}\sum_{k=0}^n k^j$. Formula (9.623.1) in \cite{Jeffrey2007} gives us
\beqq
S_j(n)=\frac{n}{j+1} \sum\limits_{i=0}^j (-1)^i \binom{j+1}{i} B_i n^{-i}, \;\;\; j\ge 1, \; n\ge 1,
\eeqq
where $B_i$ are the Bernoulli numbers $\{1,-1/2,1/6,\dots\}$. We will leave it to the reader to verify that the above 
three formulas provide the required algorithm for computing $F_n(z,\tau)$ to within $\pm \epsilon$ in $O(L^3)$ arithmetic operations on numbers of $O(L)$ bits (one should precompute and store $2L$ values of $B_i/i!$, $0\le i \le 2L$, which can be done in $O(L^2)$ arithmetic operations using $O(L^2)$ bits of memory). 
\end{proof}

\vspace{0.1cm}
\noindent
{\bf Proof of Theorem \ref{thm_main2}:}
 We are given $z \in (0,1)$, $\tau \in (0,1)$, a positive integer $n$ and a small
positive number $\epsilon$.  We will describe the algorithm for computing the value of $F=F_n(z,\tau)$.  In order to start the algorithm,
 we use identities \eqref{identities_step_1} and normalize $z$ and $\tau$ so that $|z| \le 1/2$ and $|\tau| \le 1/4$; we define $z_1$ and $\tau_1$ to be
 equal to these normalized values of $z$ and $\tau$. The algorithm is based on a recursion, and $j$ will be the counter which keeps track of
 the steps of the recursion. We initialize $j=1$, $n_1=n$, $\alpha_1=1$ and $\beta_1=0$. 
 All computations are performed on numbers of $\lceil \max(5 A_5,3 A_8)\ln\left(\tfrac{n}{\epsilon}\right) \rceil$ bits, where $A_5$ and $A_8$ are the constants appearing in Propositions \ref{prop_compute_h_z_tau2} and \ref{proposition_small_tau}.  
 
\vspace{0.1cm}
\noindent
{\bf The algorithm:} 
\begin{itemize}
 \item[(i)]  If $n_j \le \ln(n)^3$ then we compute $f=F_{n_j}(z_j,\tau_j)$ by direct summation. Terminate the algorithm and return $F=\alpha_j f + \beta_j$. 
 \item[(ii)] If $|\tau_j| < n_j^{-4}$ then we compute $f=F_{n_j}(z_j,\tau_j)$ to the accuracy of $\pm \epsilon/n^3$ using the algorithm from Proposition \ref{proposition_small_tau}. Terminate the algorithm and return $F=\alpha_j f + \beta_j$.
\item[(iii)] If $|\tau_j| \ge n_j^{-4}$ we set $n_{j+1}=\lfloor 2 n_j |\tau_j| \rfloor$, 
$\tilde z=\frac{z_j}{2|\tau_j|}$ and $\tilde\tau=-\frac{1}{4\tau_j}$.
Set $z_{j+1}$ and $\tau_{j+1}$ to be the normalized values of $\tilde z$ and $\tilde \tau$ (use identities \eqref{identities_step_1}).
If $\tau_j>0$, then 
\beq\label{formula_alpha_beta1}
\alpha_{j+1}=\frac{\alpha_j}{\sqrt{2\tau_j}} \exp\left(\frac{\pi \i}{4}-\frac{\pi \i z_j^2}{\tau_j}\right), \;\;\;
\beta_{j+1}=\beta_j+\alpha_jR_{n_{j+1},n_j}(z_j,\tau_j),
\eeq
while if $\tau_j<0$ then 
\beq\label{formula_alpha_beta2}
\alpha_{j+1}=\frac{\alpha_j}{\sqrt{2|\tau_j|}} \exp\left(-\frac{\pi \i}{4}-\frac{\pi \i z_j^2}{\tau_j}\right), \;\;\;
\beta_{j+1}=\beta_j+\alpha_j{\overline{R_{n_{j+1},n_j}(-z_j,|\tau_j|)}},
\eeq
where $R_{m,n}(z,\tau)$ is given by by \eqref{def_Rmnztau}, and the values of the Mordell integral $h(\cdot,\cdot)$ appearing 
in \eqref{def_Rmnztau} are computed using the algorithm from Proposition \ref{prop_compute_h_z_tau2} to the accuracy $\pm \epsilon/n^3$. 
\item[(iv)] Increase the counter $j\mapsto j+1$ and proceed to step (i). 
\end{itemize}
Assume that this algorithm stops after $J$ iterations. The fact that this algorithm returns the correct value of $F_n(z,\tau)$ can be verified by induction on $J$ using identity \eqref{main_formula}. 
Let us investigate the number of arithmetic operations required by this algorithm. At each iteration of the algorithm, provided that it does not terminate in steps (i) or (ii), 
 we have the new value $n_{j+1}$ which satisfies $n_{j+1}=\lfloor 2 n_j |\tau_j| \rfloor \le n_j/2$ 
 (recall that $|\tau_j| \le 1/4$). This shows that the algorithm will either terminate in step (i) after at most $\log_2(n)$ iterations, or it 
 will terminate in step (ii) before that. 
Let us denote $L=\ln\left(\tfrac{n}{\epsilon}\right)$. Step (ii) (resp. step (iii)) requires $O(L^3)$ (resp. $O(L^2)$) 
arithmetic operations and both of these steps require $O(L^2)$ bits of memory (see Propositions \ref{prop_compute_h_z_tau2}
and  \ref{proposition_small_tau}). Since step (ii) will be executed at most once, and step (iii) at most $\log_2(n)$ times, it is clear that 
the algorithm requires $O(L^3)$ arithmetic operations and $O(L^2)$ bits of memory. 

Finally, let us consider the accuracy of this algorithm. All numbers appearing in the algorithm are evaluated to the accuracy $\pm \epsilon/n^3$. 
Since the algorithm did not terminate at the iteration $J-1$, we have 
$n_{J-1}>1$ and $|\tau_{J-1}| \ge n_{J-1}^{-4}>n^{-4}$. Recall that for all $j$ we have $n_{j+1} \le 2 n_j |\tau_j|$, therefore
\beqq
1<n_{J-1} \le 2^{J-2}n\prod\limits_{i=1}^{J-2} |\tau_i|,
\eeqq
and applying formulas \eqref{formula_alpha_beta1} and \eqref{formula_alpha_beta2} we obtain
\beqq
|\alpha_{J-1}|=\left[2^{J-2}\prod\limits_{i=1}^{J-2} |\tau_i| \right]^{-\frac{1}{2}} < \sqrt{n}.
\eeqq 
Assuming that the algorithm terminates in step (ii), then the final accuracy is at least $(\pm \epsilon/n^3)\times \sqrt{n} \times \log_2(n)$, which is smaller than the required accuracy $\pm \epsilon$. On the other hand, if the algorithm terminates in step (i), 
then $|\alpha_J|=|\alpha_{J-1}|/\sqrt{2|\tau_{J-1}|}< n^{\frac{5}{2}}$, and the final accuracy is $(\pm \epsilon/n^3)\times n^{\frac{5}{2}} \times \log_2(n)$, which is still smaller than $\pm \epsilon$. 
\qed

\label{page11}

\vspace{0.25cm}
\noindent
{\bf Remark 1:} One may ask the following natural question: is the choice $n_{j+1}=\lfloor 2 n_j |\tau_j| \rfloor$ in the above algorithm  optimal? In other words, given that the identity \eqref{main_formula} is true for all positive $m$ and $n$, why can not we choose $m=n_{j+1} \ll 2 n_j |\tau_j|$, thus reducing the number of terms in the new sum $F_{m}(\cdot,\cdot)$? The rationale for this choice is that
the computation $R_{m,n_j}(\cdot,\cdot)$ in formula \eqref{def_Rmnztau} requires the evaluation of the Mordell integral  
$$h(z_j+(2n_j+1)|\tau_j|-m-\tfrac{1}{2},-2\tau_j),$$ 
which involves more than $|2n_j|\tau_j| -m|$ arithmetic operations (see step 2 in the proof of Proposition \ref{prop_compute_h_z_tau2}). Therefore, while the choice of $m=n_{j+1} \ll 2n_j |\tau_j|$ will decrease the number of terms (and the computation time) of $F_{m}(\cdot,\cdot)$, 
any gain will be canceled by the corresponding increase in the computation time of $R_{m,n_j}(\cdot,\cdot)$.

\section{Practical implementation and extensions of the algorithm}\label{section_numerics}

As is often the case, the algorithm which can be analyzed analytically and which allows for rigorous error bounds is not necessarily the most efficient algorithm from the practical point of view. While it is certainly possible to perform practical computations 
of $F_n(z,\tau)$ using the algorithm described in the proof 
of Theorem \ref{thm_main2}, in this section we will explain how one could produce a  more efficient algorithm with a certain amount of pre-computation and a few numerical experiments. This practical implementation is suitable in the case when
we need to compute $F_n(z,\tau)$ to a fixed accuracy $\pm \epsilon$ for many different values of $z$, $\tau$ and $n \le N_1 $ (for 
some fixed value of $N_1$). 

As we saw in the proof of Theorem \ref{thm_main2}, the main computational effort is spent in evaluating $F_n(z,\tau)$ for very small values of $\tau$ (when $|\tau|<n^{-4}$) and in computing the Mordell integral $h(z,\tau)$. We do not see a way of making the former of these tasks much faster, however the latter task can certainly be done much more efficiently. Part (i) of Proposition \ref{prop_compute_h_tau_z} shows that in order to compute $h(z,\tau)$ we need to be able to evaluate the error function $\Phi(e^{\frac{\pi \i }{4}} x)$ for $x\in \r$ and to compute $J(z,\tau)$ defined by \eqref{def_big_Jn_z_tau}. Let us first discuss the computation of the error function. 
Our approach to computing  $\Phi(e^{\frac{\pi \i}{4}}x)$ is to divide the interval $(0,\infty)$ into a number of 
sub-intervals $0<x_1<x_2<\dots<x_m=x^*<\infty$, and use the Chebyshev approximation on each sub-interval. 
On the infinite interval $(x^*,\infty)$ we define the function $f(\sigma)$ via 
$\Phi(e^{\frac{\pi \i}{4}}x)=1+e^{-\i xz^2} x^{-1} f(\sigma)$, where
 $\sigma:=(x^*/x)^2$ and we approximate $f(\sigma)$ by the first few terms of the Chebyshev series. 
It is known (see \cite{Nemeth}) that the coefficients of the corresponding Chebyshev series decay as $\exp(-2\sqrt{n x^*}) O(n^{-\frac{1}{2}})$, therefore, by a proper choice of $x^*$ we can be sure that we need only a few terms of the Chebyshev series to obtain the required accuracy.
Once we have chosen $x^*$, we divide $(0,x^*)$ into $m$ sub-intervals of equal length, and on each of them we compute the first few terms of the Chebyshev series. Note that on each finite interval $(x_i,x_{i+1})$, $1\le i <m$, the Chebyshev series approximating $\Phi(e^{\frac{\pi \i}{4}}x)$ must converge exponentially fast since $\Phi(z)$ is an entire functions. By choosing $m$ large enough we can make sure that the number of significant terms in each Chebyshev series is small. For example, in our implementation of this algorithm we chose $x^*=5$ and $m=5$, and on each subinterval we approximated $\Phi(e^{\frac{\pi \i}{4}}x)$ by the first thirty terms of Chebyshev series. This approximation had absolute error $\le 10^{-30}$ over all real values of $x$.

\label{J_discussion}
The second important problem is how to evaluate $J(k+z,\tau)$ efficiently. 
Our approach is based on the following formula 
\beq\label{J_practical_computing}
J(k+z,\tau)=\frac{1}{2k}\int_{0}^{\infty} 
e^{-y} g_{z,\tau}\left(\tfrac{y}{k}\right) \d y, \;\;\;
{\textnormal{ where }} \;\;\;
 g_{z,\tau}\left(y\right):=\frac{e^{\frac{\i \tau y^2}{4\pi}-zy}}{\cosh(\frac{y}{2})},
\eeq
which follows from \eqref{def_big_Jn_z_tau} by changing the variable of integration $x=y/(2k)$.
For $k\ge 1$,  $|z|\le 1/2$ and $\tau\in (0,1)$ the function  $y \in (0,\infty) \mapsto g_{z,\tau}\left(\tfrac{y}{k}\right)$ is bounded, and as $k \to +\infty$ it converges to $g_{z,\tau}(0)=1$. Therefore, when $k$ is reasonably large, the integral in \eqref{J_practical_computing} can be computed to a very high-precision using the Gauss-Laguerre quadrature with the weight function $e^{-y}$ and $M$ nodes. 
The problem is to decide what ``reasonably large" means, and here one should do a number of numerical experiments to find the optimal values of $k$ and $M$.  If we take $k$ to be a large number, then the function $g_{z,\tau}\left(\tfrac{y}{k}\right)$ is very close to $1$, 
and $M$ -- the number of nodes in Gauss-Laguerre quadrature -- can be taken quite small in order to achieve the required accuracy. 
Of course, the disdvantage of choosing $k$ to be large 
is that it would require many evaluations of the error function in \eqref{def_big_Hn_z_tau}, and it would increase the run-time of the algorithm. On the other hand, if we take $k$ to be a small integer, then the function  $g_{z,\tau}\left(\tfrac{y}{k}\right)$ oscillates and $M$ has to be very large in order to provide the required accuracy, which would again increase the run-time of the algorithm. Therefore, $k$ 
and $M$ have to be chosen so that the computation time of $H_k(z,\tau)$ it approximately equal to the computation time of $J(k+z,\tau)$.

In our examples we took $k=5$ in formulas  \eqref{hnztau_computing}, \eqref{def_big_Hn_z_tau} and \eqref{def_big_Jn_z_tau}
 and we have used the Gauss-Laguerre quadrature with $M=124$ nodes (truncated to the smallest
 40 nodes, see \cite{Mastroianni}) to evaluate the integral in 
 \eqref{def_big_Jn_z_tau}. In order to verify the accuracy, we have computed $h(z,\tau)$ on a very fine regular grid of points in the rectangle $|z|\le 1/2$ and $\tau \in (0,1/2)$ using the above algorithm and we have checked that the relative error is always less than $10^{-29}$
 (when compared  with the algorithm described in Proposition \ref{prop_compute_h_z_tau2}). While the algorithm based on Gauss-Laguerre quadrature is very efficient,  we were not able to provide rigorous error estimates. It is known that the error of the $M$-point Gauss-Laguerre quadrature can be bounded by a multiple of 
\beqq
\eta_{2M}:=\sup\left\{ \; \left|\frac{\partial^{2M}}{\partial y^{2M}} g_{z,\tau}\left(y\right)\right| \; : \; y\ge 0\right\},
\eeqq   
(see Theorem 3 in \cite{Stroud_Chen}),  but we were not able to obtain good upper bounds for this quantity.

\begin{figure}[t]
\centering
\subfloat[][]{\label{fig2_simple}\includegraphics[height =5cm]{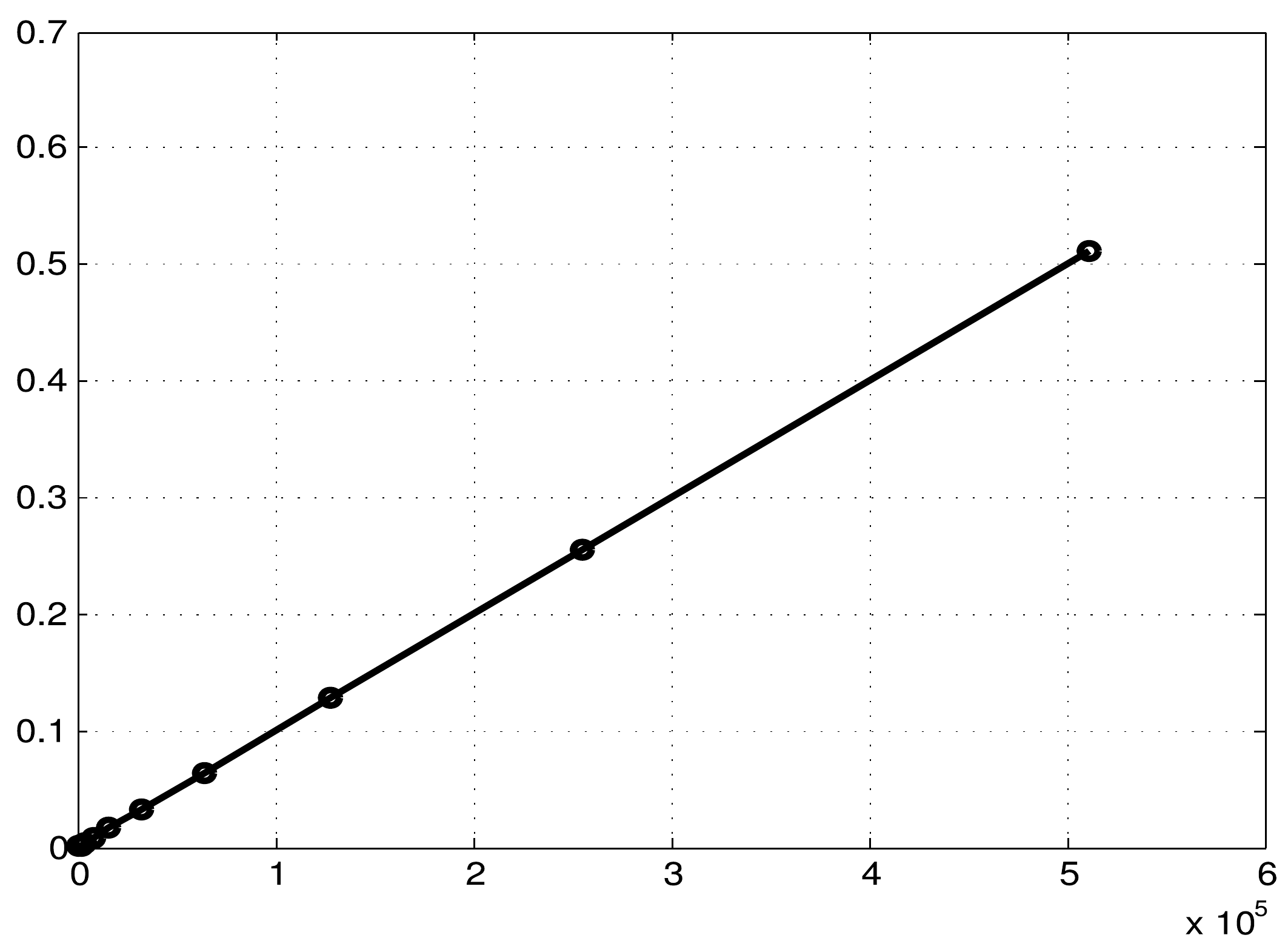}} \qquad 
\subfloat[][]{\label{fig2_fast}\includegraphics[height =5.1cm]{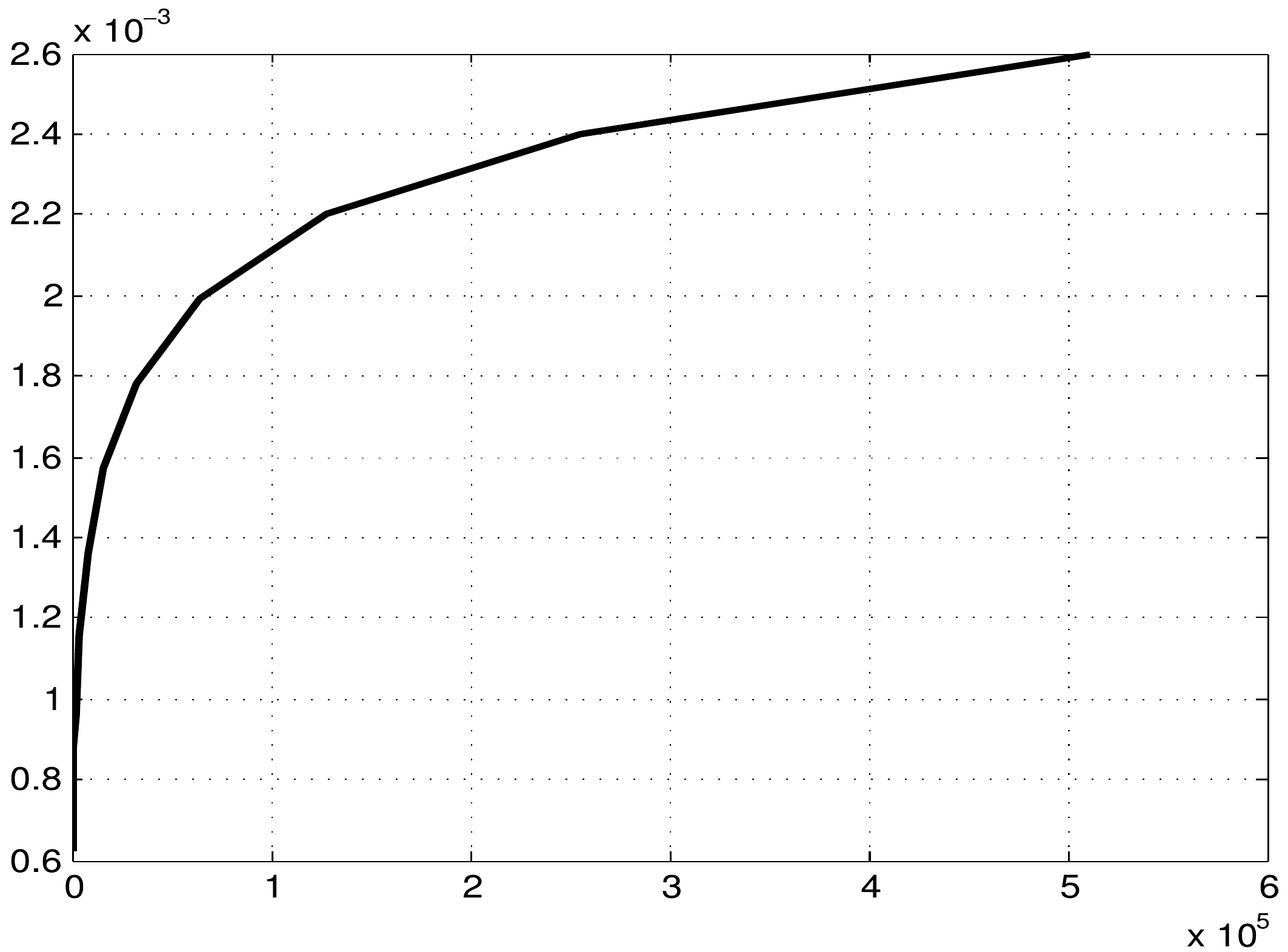}} \\
\subfloat[][]{\label{fig2_simple_vs_fast}\includegraphics[height =5cm]{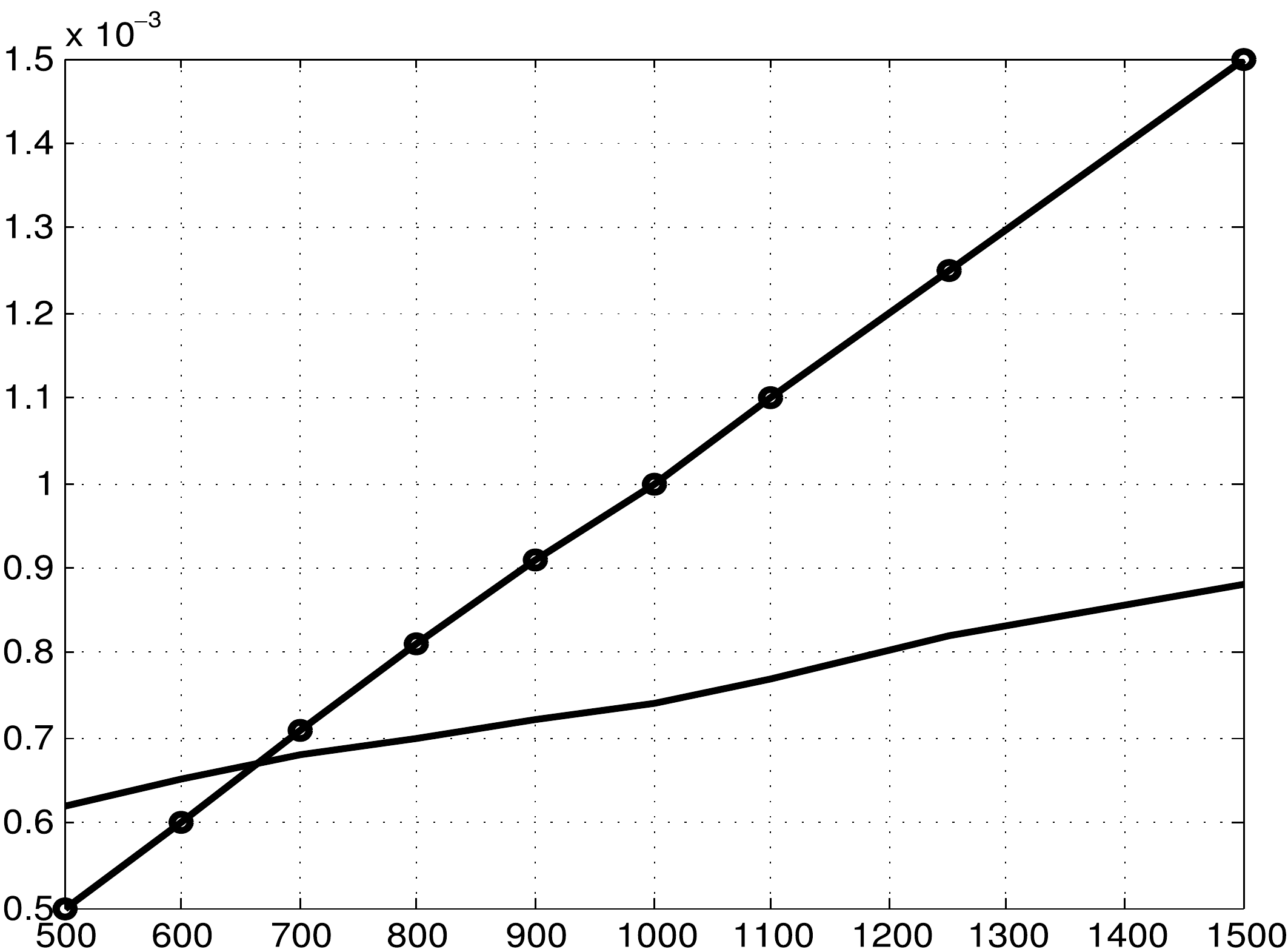}}  \qquad
\subfloat[][]{\label{fig2_fast_log_scale}\includegraphics[height =5.02cm]{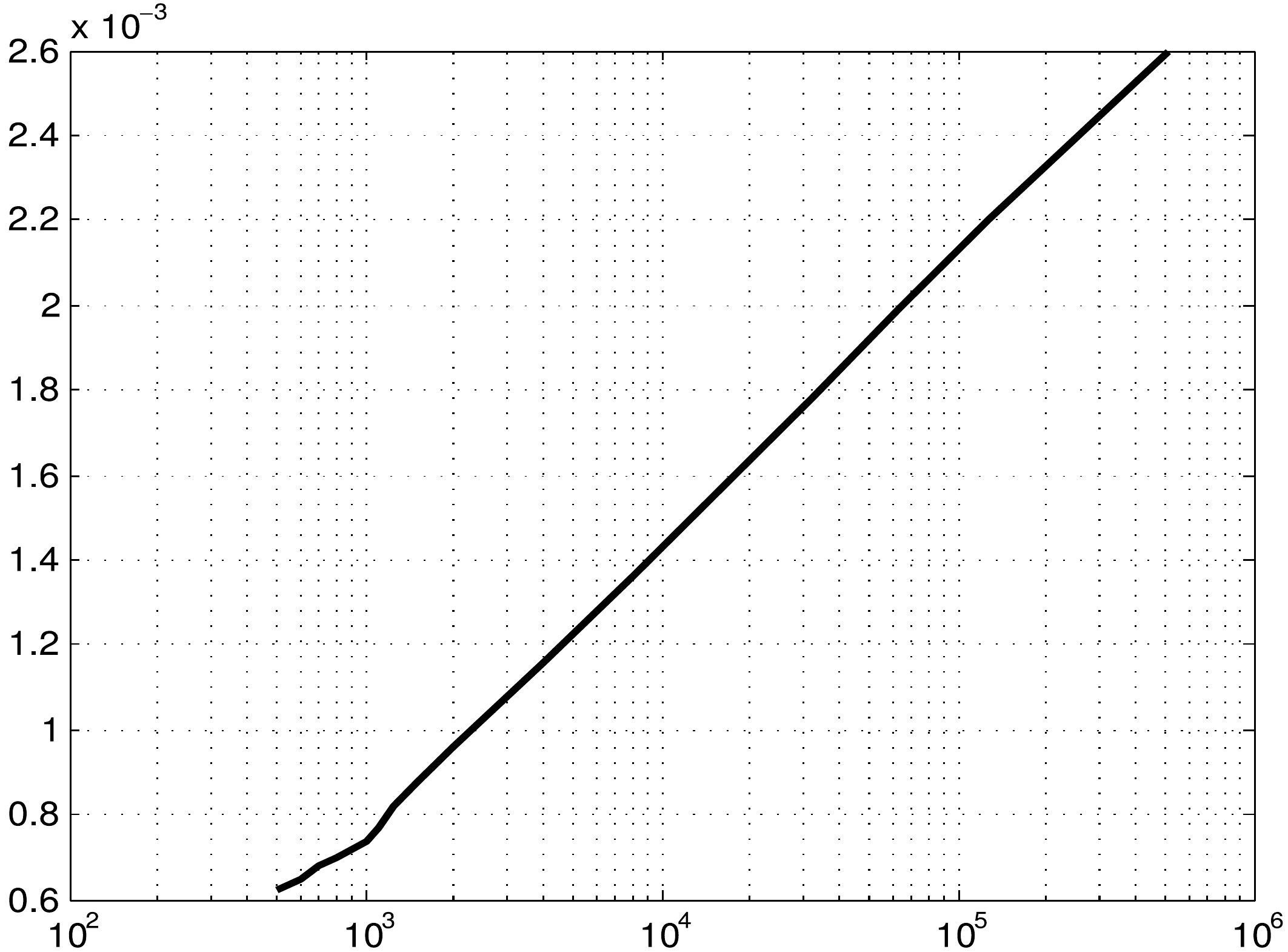}} 
\caption{Average run time for computing a single value of $F_n(z,\tau)$ ($x$-axis represents $n$, $y$-axis represents time in seconds).
The curve marked with circles corresponds to direct summation algorithm based on \eqref{def_F}, the curve without markers corresponds to Hiary's algorithm.  }
\label{fig2}
\end{figure}

The results of our numerical experiments are presented in Figure \ref{fig2}. The algorithm 
 was implemented in Fortran using the quadruple precision. Due to the fact that the number of iterations of the algorithm
 varies for different values of $\tau$,  we have tested the algorithm for 1000 random pairs 
 $(z,\tau)$, sampled uniformly from the domain $0<\tau<1/4$ and $-1/2<z<1/2$, and 
 the results in Figure \ref{fig2} represent the average run-time for a single evaluation
 of $F_n(z,\tau)$. Since we are working in fixed precision (which does not depend on $n$, as in the algorithm in the proof of 
 Theorem \ref{thm_main2}), these computations involve an unavoidable loss of precision, which becomes more pronounced as $n$ increases.
 See the paragraph preceding Remark 1 on page \pageref{page11}, explaining the reason for this loss of precision. 
 Our results indicate that the difference between the values of $F_n(z,\tau)$ computed 
 via direct summation \eqref{def_F} and our version of Hiary's algorithm is typically of the order of $10^{-28}$ for $n=10^3$ and around
$10^{-25}$ for $n=10^5$. This loss of precision is still acceptable for practical purposes. 
Most importantly, the figures \ref{fig2_fast} and \ref{fig2_fast_log_scale} confirm that the run-time 
of the algorithm increases essentially logarithmically with $n$, 
and that the simplified version of Hiary's algorithm is much faster than the 
summation of $n$ terms in \eqref{def_F} even for moderately large $n$.

Finally, we will discuss a related problem of evaluating the finite sum of the form
\beq\label{def_F_n_j}
F_{n,j}(z,\tau):=n^{-j} \sum\limits_{k=1}^n k^j e^{2\pi \i (zk+\tau k^2)}=
(2\pi \i n)^{-j} \frac{\partial^j}{\partial z^j} F_n(z,\tau). 
\eeq
Hiary's result  \cite[Theorem 1.1]{Hiary_theta} states that $F_{n,j}(z,\tau)$ can be computed to the required accuracy $\pm \epsilon$ in poly-log time in $n/\epsilon$ (though the precise statement is slightly more complicated, as the implied constants also depend on $j$). 
Our results can also be adapted to give a  simpler version of Hiary's poly-log time algorithm for computing $F_{n,j}(z,\tau)$. Below we will sketch the main ideas of the practical implementation of such an algorithm.  

We will follow the same path as in the algorithm described in the proof of Theorem \ref{thm_main2}. Our goal is to compute the values of 
$F_{n,j}(z,\tau)$ for $0\le j \le J$. At each step of the recursion, we normalize the values of $z$ and $\tau$ so that $|z|\le 1/2$ and $|\tau|\le 1/4$. If $|\tau|<n^{-4}$, then we compute $F_{n,j}(z,\tau)$ by expanding the exponential function in \eqref{def_F_n_j} in Taylor series and applying similar ideas as in the proof of Proposition \ref{proposition_small_tau}. 
If $|\tau|\ge n^{-4}$, then we use the following identities 
\beq\label{main_formula_derivatives}
\frac{\partial^j}{\partial z^j} F_n(z,\tau)=\frac{e^{\frac{\pi \i }{4}}}{\sqrt{2\tau}}
\sum\limits_{k=0}^j \binom{j}{k}
\left[
\frac{\partial^{j-k}}{\partial z^{j-k}} e^{-\frac{\pi \i z^2}{2\tau}} \right]
\times  
\left[\frac{\partial^k}{\partial z^k}F_m\left(\tfrac{z}{2\tau},-\tfrac{1}{4\tau}\right)\right]+
\frac{\partial^j}{\partial z^j}R_{m,n}(z,\tau), \;\;\; 0\le j \le J,
\eeq
which follow from \eqref{main_formula} by applying Leibniz rule. These identities reduce the computation of $F_{n,j}(z,\tau)$ to the computation of $F_{m,j}(\cdot, \cdot)$ with $m=\lfloor 2n\tau \rfloor<n/2$ and $1\le j \le J$, and complete the main step of the recursion.

Applying identities \eqref{main_formula_derivatives} in practice will involve computing $\frac{\partial^j}{\partial z^j}h(z,\tau)$,
which is equivalent to evaluating   $G_1=\frac{\partial^j}{\partial z^j}H_k(z,\tau)$ and 
$G_2=\frac{\partial^j}{\partial z^j}J(k+z,\tau)$ (see formulas \eqref{def_big_Hn_z_tau},  \eqref{def_big_Jn_z_tau} and \eqref{hnztau_computing}).
 Computing $G_1$ does not pose a serious problem, as applying Leibniz rule to \eqref{def_big_Hn_z_tau} would give us an explicit 
expression for  $G_1$, and  since $\Phi'(z)=(2/\sqrt{\pi})\exp(-z^2)$ it is easy 
to see that this explicit expression would involve only elementary functions and the error function $\Phi(z)$. 
At the same time, when $z$ is large it will be more efficient to compute  
$\frac{\partial^j}{\partial z^j} e^{z^2} (\Phi(z)-1)$ directly by taking derivatives of the right-hand side of formula \eqref{Phi_final_formula} (or by using the asymptotic expansion for this function, see formula 8.254 in \cite{Jeffrey2007}). 
The value of $G_2=\frac{\partial^j}{\partial z^j}J(k+z,\tau)$ can be computed using the generalized Gauss-Laguerre quadrature. 
Indeed, from \eqref{def_big_Jn_z_tau} we find that
\beq\label{J_derivatives}
\frac{\partial^j}{\partial z^j}J(k+z,\tau)= \frac{(-1)^j}{2} k^{-j-1} 
\int_{0}^{\infty} 
e^{-y} y^j g_{z,\tau}\left(\tfrac{y}{k}\right) \d y,
\eeq
where $g_{z,\tau}(y)$ is defined in \eqref{J_practical_computing}. 
Using the same strategy as discussed on page \pageref{J_discussion} (following equation \eqref{J_practical_computing})  the integral in the right-hand side of \eqref{J_derivatives} can be evaluated using the generalized Gauss-Laguerre quadrature with the weight function $x^j e^{-x}$ and $M$ nodes. By experimenting with different values of $k$ and $M$ (and choosing the optimal ones) we can 
compute $\frac{\partial^j}{\partial z^j}h(z,\tau)$ very efficiently with the required accuracy.



\end{document}